\documentclass[11pt]{article}
\usepackage[a4paper,margin=1in]{geometry}
\usepackage[T1]{fontenc}
\usepackage[utf8]{inputenc}
\usepackage{lmodern}
\usepackage{amsmath,amssymb,amsthm,mathtools,mathrsfs}
\usepackage{booktabs,array,tabularx,enumitem}
\usepackage{caption}
\usepackage[numbers,sort&compress]{natbib}
\usepackage{microtype}
\usepackage[hidelinks]{hyperref}
\usepackage[nameinlink,capitalize,noabbrev]{cleveref}
\usepackage{xurl}
\allowdisplaybreaks
\renewcommand{\arraystretch}{1.12}
\setlist[itemize]{leftmargin=1.5em,itemsep=2pt,topsep=3pt}
\newtheorem{theorem}{Theorem}[section]
\newtheorem{lemma}[theorem]{Lemma}
\newtheorem{proposition}[theorem]{Proposition}
\newtheorem{corollary}[theorem]{Corollary}
\theoremstyle{definition}

\theoremstyle{remark}

\newcommand{\E}{\mathbb E}
\newcommand{\Pp}{\mathbb P}
\newcommand{\R}{\mathbb R}
\newcommand{\ip}[2]{\langle #1,#2\rangle}
\newcommand{\norm}[1]{\left\|#1\right\|}
\newcommand{\KL}{\operatorname{KL}}

\newcommand{\prox}{\operatorname{prox}}
\newcommand{\cH}{\mathcal H}
\newcommand{\cE}{\mathcal E}
\newcommand{\ind}{\mathbf 1}
\newcommand{\Ham}{\operatorname{Ham}}

\newcommand{\authornames}{Haihan Zhang, Wendao Wu, Chenheng Zhang, Yanyi Li, Chunyuan Zheng, Cong Fang, Haoxuan Li, Zhouchen Lin}
\newcommand{\affiliation}{Peking University}
\newcommand{\emailA}{colombo.2026@outlook.com}

\newcommand{\mail}[1]{\href{mailto:#1}{\texttt{#1}}}
\newif\ifanonymous
\anonymousfalse
\ifanonymous
\author{Anonymous Authors}
\hypersetup{pdfauthor={Anonymous Authors}}
\else
\author{Haihan Zhang$^{1,*}$\quad Wendao Wu$^{1,*}$\quad Chenheng Zhang$^{1,*}$\\[0.20em]
Yanyi Li$^1$\quad Chunyuan Zheng$^1$\\[0.20em]
Cong Fang$^{1,\dagger}$\quad Haoxuan Li$^{1,\dagger}$\quad Zhouchen Lin$^{1,\dagger}$\\[0.60em]
\small $^1$\affiliation\\[0.35em]
\footnotesize\mail{\emailA} \\[0.35em]
\small $^*$Equal contribution.\quad $^\dagger$Corresponding authors.}
\hypersetup{pdfauthor={\authornames}}
\fi
\newcommand{\ResearchAgentSystem}{our laboratory's internal auto-research system}

\title{\textbf{Sharp High-Probability Statistical Rates for Heavy-Tailed Nonsmooth Convex Optimization}}
\date{}
\hypersetup{pdftitle={Sharp High-Probability Statistical Rates for Heavy-Tailed Nonsmooth Convex Optimization}}
\begin{document}
\maketitle

\begin{abstract}
We close the gap between known high-probability upper and lower bounds for both general convex and strongly convex nonsmooth stochastic optimization under heavy-tailed noise. Our upper bounds separate the cost of confidence from the total noise magnitude, attaining the statistical lower-bound orders established by \citet{liu2026}. A fresh stochastic subgradient has centered norm and directional $p$th moments bounded by $L^p$ and $s^p$, respectively, with $1<p\le2$ and $0<s\le L$. For $T$ oracle calls and failure probability $\delta$, write $q=1-1/p$, $\ell=\log(3/\delta)$, and $A=s(L^2/s^2)^q$. With initial distance bound $D$ and strong-convexity modulus $\mu$, the minimax objective errors in the nonsaturated statistical regime are
\[
\begin{aligned}
\text{General convex:}\qquad &\Theta\!\left(\frac{D(A+s\ell^q)}{T^q}\right),\\[3pt]
\text{Strongly convex:}\qquad &\Theta\!\left(\frac{A^2+s^2\ell^{2q}}{\mu T^{2q}}\right).
\end{aligned}
\]
The second rate applies in the quadratic regime; matching requires dimension $d\ge\lceil L^2/s^2\rceil$ and domination of the deterministic and finite-variance terms. The upper bounds hold in every finite dimension. They replace the previous confidence coefficient $s^{1/p}L^{1-1/p}$ by $s$, and its square by $s^2$ under strong convexity. The key is an online inequality that controls directional fluctuations separately from the total second moment supplied by rejection sampling. Localization preserves this separation under strong convexity without additional horizon logarithms. The result applies to known proximal regularizers and permits unrestricted internal computation.
\end{abstract}
\noindent\textbf{Keywords:} stochastic convex optimization; heavy-tailed noise; high-probability bounds; effective dimension; oracle complexity.

\begin{samepage}
\begingroup\frenchspacing\looseness=-1
\noindent\textbf{AI Usage.}\ 
Nearly the entire research pipeline for this paper was carried out by Colombo, 
\ResearchAgentSystem{}, powered by GPT-5.6 Sol. The system also conducted a
Lean-backed article audit of the resulting manuscript. The authors subsequently reviewed and approved the
mathematical claims, presentation, and formal artifacts, and take
responsibility for the final manuscript. The complete Lean audit report and the system's technical report will be made public at a later date.\par
\endgroup
\end{samepage}

\paragraph{Publication Intent and Recognition of Independent Work.} 
This preprint is released as a research output of our study of
automated scientific research, for public scrutiny and discussion.
As we do not intend to submit this manuscript to a journal or
conference, it has not undergone the full editorial refinement
and internal review we would normally require for submission.
The authors remain responsible for its contents and for correcting
any errors or inaccurate attribution.

We recognize that releasing these results may affect researchers
who have independently been pursuing the same questions, including
work that is not yet public.
If this applies to your work, we invite you to contact us at
\texttt{colombo.2026@outlook.com}.
A brief description of the overlap, together with any dated
materials you are comfortable sharing, would help us understand
its scope and chronology.
Such materials might include draft excerpts, presentation slides,
or revision records; a complete manuscript or unpublished proof
is not required to begin the conversation.

We will review these cases carefully and update citations,
acknowledgments, and accounts of independent or concurrent work
as appropriate, including revising priority claims where warranted.
Nonpublic materials will be treated confidentially and used only
to assess the overlap unless their authors authorize otherwise.
We will obtain explicit permission from the authors before
publicly discussing such materials or providing them to an
automated research system.

\section{Introduction}\label{sec:introduction}

We study high-probability oracle complexity for nonsmooth general convex and strongly convex optimization when gradient noise has only a finite $p$th moment, for $1<p\le2$. Under a single bound on the norm moment, mirror descent and clipped stochastic-gradient methods attain the characteristic error orders $T^{-(1-1/p)}$ for convex problems and $T^{-2(1-1/p)}$ for strongly convex problems \citep{vural2022,liuzhou2023}. High-probability guarantees require more than the correct exponent of the sample budget $T$: they must also identify how noise geometry interacts with the failure probability. A vector can have a large norm while every fixed projection has a much smaller moment, so a norm bound alone can obscure the cost of confidence.

We study this distinction for composite objectives $F=f+r$ on a known closed convex set. The unknown function $f$ is convex and $G$-Lipschitz; the known regularizer $r$ admits proximal evaluations and is $\mu$-strongly convex, where $\mu=0$ includes arbitrary convex regularizers and $r=0$. The initial point is within distance $D$ of a minimizer. One charged oracle call returns a fresh unbiased subgradient estimate of $f$; it does not reveal a function value or a reusable sample function. For its centered noise $\zeta$, assume
\begin{equation}\label{eq:intro-moments}
 \E\norm\zeta^p\le L^p,
 \qquad \sup_{\norm v=1}\E|\ip v\zeta|^p\le s^p,
 \qquad 0<s\le L.
\end{equation}
The conditions hold conditionally at adaptive queries. The ratio $e=L^2/s^2$ measures the separation between total and directional noise and serves as a generalized effective dimension \citep{liu2026}.

\paragraph{The unresolved confidence dependence.}
Set $q=1-1/p$, $\ell=\log(3/\delta)$ for failure probability $0<\delta<1/10$, and
\[
 A=s e^q=s^{2/p-1}L^{2-2/p},
 \qquad B=s^{1/p}L^{1-1/p}=s e^{q/2}.
\]
In the heavy-tail statistical regime, the refined clipped-gradient analysis of \citet[Theorems 1--2]{liu2026} gives upper bounds of order $D(A+B\ell^q)/T^q$ and $(A^2+B^2\ell^{2q})/(\mu T^{2q})$ for the convex and strongly convex cases. Its high-probability lower bounds have $s$ and $s^2$ in the respective confidence terms. The gap is polynomial in $e$: $B/s=e^{q/2}$. Thus the existing bounds leave open whether the larger confidence cost is intrinsic to optimization or can be removed by a different method.

\paragraph{Our result.}
We attain the lower confidence scale in both curvature regimes. Let $\cE_T^{(\mu)}(\delta)$ denote the smallest objective-error threshold guaranteed with probability $1-\delta$, uniformly over the model, by an adaptive randomized algorithm with at most $T$ raw oracle calls. In the nonsaturated heavy-tail regime, our results give
\begin{equation}\label{eq:intro-rates}
 \cE_T^{(0)}(\delta)\asymp_{p,C_0}
 \frac{D(A+s\ell^q)}{T^q},
 \qquad
 \cE_T^{(\mu)}(\delta)\asymp_{p,C_0}
 \frac{A^2+s^2\ell^{2q}}{\mu T^{2q}}\quad(\mu>0),
\end{equation}
where the second expression applies in the quadratic branch of the strongly convex error. The upper bounds hold in every finite dimension; matching lower bounds require $d\ge\lceil e\rceil$. Here $C_0$ is the fixed domination constant in \cref{cor:heavy}; subscripts on $\asymp$ specify the allowed dependence of its positive constants. The precise conditions under which the statistical terms dominate are given in \cref{cor:matching,cor:heavy}.

\Cref{tab:comparison} summarizes the improvement and distinguishes the assumptions of earlier benchmarks.

\begin{table}[t]
\centering
\small
\caption{High-probability objective-error bounds after $T$ fresh stochastic-gradient calls, retaining the leading nonsaturated statistical terms. Here $q=1-1/p$, $\ell=\log(3/\delta)$, $e=L^2/s^2$, $A=se^q$, $B=se^{q/2}$, and $h_T=\log(\log(\mathrm e T)/\delta)$. Omitted terms must be dominated. In particular, the Liu lower-bound entries require $T\gtrsim_p1+\ell/e$ and inactive amplitude caps ($G$ in the convex case; $G$ and $\mu D$ in the strongly convex case). The Das et al. row assumes a fixed covariance envelope $\Sigma$ with $\operatorname{tr}\Sigma\le L^2$, $\|\Sigma\|_{\rm op}\le s^2$, and $T\ge\log\log d$, in the regime where its additional finite-budget terms are dominated. Liu--Zhou study a Lipschitz objective on a projected domain. Matching orders require $d\ge\lceil e\rceil$ and the conditions of \cref{cor:heavy}, which defines $C_0$.}
\label{tab:comparison}
\begingroup
\renewcommand{\arraystretch}{1.45}
\setlength{\tabcolsep}{4pt}
\begin{tabularx}{\linewidth}{@{}>{\raggedright\arraybackslash}p{0.20\linewidth}>{\centering\arraybackslash}X>{\centering\arraybackslash}X>{\raggedright\arraybackslash}p{0.23\linewidth}@{}}
\toprule
Work & Upper bound & Lower bound & Regime / scope\\
\midrule
\multicolumn{4}{@{}l}{\textbf{General convex} ($\mu=0$)}\\
\addlinespace[2pt]
\citet{liuzhou2023}\newline (Theorem 3)
& $\displaystyle O_p\!\left(\frac{DL\ell}{T^q}\right)$ & ---
& Norm moment only; projected method\\
\addlinespace[4pt]
\citet{das2024}\newline (Theorem 4)
& $\displaystyle O\!\left(D\sqrt{\frac{L^2+sLh_T}{T}}\right)$ & ---
& $p=2$, $G\le L$; covariance envelope\\
\addlinespace[4pt]
\citet{liu2026}
& $\displaystyle O_p\!\left(\frac{D(A+B\ell^q)}{T^q}\right)$
& $\displaystyle\Omega_p\!\left(\frac{D(A+s\ell^q)}{T^q}\right)$
& Two moments; confidence gap\\
\addlinespace[4pt]
\textbf{This Paper}\newline (\cref{cor:heavy})
& \multicolumn{2}{c}{$\displaystyle\boldsymbol{\Theta_{p,C_0}\!\left(\frac{D(A+s\ell^q)}{T^q}\right)}$}
& New upper bound matches known order\\
\midrule
\multicolumn{4}{@{}l}{\textbf{Strongly convex} ($\mu>0$, quadratic branch)}\\
\addlinespace[2pt]
\citet{liuzhou2023}\newline (Theorem 8)
& $\displaystyle O_p\!\left(\frac{L^2\ell^2}{\mu T^{2q}}\right)$ & ---
& Norm moment only; $G\le L$; Lipschitz loss\\
\addlinespace[4pt]
\citet{liu2026}
& $\displaystyle O_p\!\left(\frac{A^2+B^2\ell^{2q}}{\mu T^{2q}}\right)$
& $\displaystyle\Omega_p\!\left(\frac{A^2+s^2\ell^{2q}}{\mu T^{2q}}\right)$
& Two moments; confidence gap\\
\addlinespace[4pt]
\textbf{This Paper}\newline (\cref{cor:heavy})
& \multicolumn{2}{c}{$\displaystyle\boldsymbol{\Theta_{p,C_0}\!\left(\frac{A^2+s^2\ell^{2q}}{\mu T^{2q}}\right)}$}
& New upper bound matches known order\\
\bottomrule
\end{tabularx}
\endgroup
\end{table}

\paragraph{The obstacle and the new inequality.}
The gap concerns which aspect of noise governs confidence. In the existing clipped-gradient analysis, a norm threshold limits large observations but also enters the concentration cost. Lowering that threshold reduces the fluctuation term while increasing bias; raising it reverses the tradeoff. Thus choosing a different threshold within that analysis does not by itself separate the two moment scales. Nor does an accurate estimate at a single query immediately supply an optimization method: successive queries are adaptive, and their mean subgradients need not agree.

Our main technical contribution is an online inequality whose leading confidence term uses only the directional moment. After preprocessing, let $V$ bound the total second moment and $s_0$ the directional $p$th-moment scale. The statistical part of its average regret over a radius-$R$ ball is of order $R[\sqrt{V/n}+s_0(\log(1/\delta)/n)^q]$. In particular, the total second-moment term is not multiplied by a confidence factor. The learner achieves this separation by integrating over Gaussian perturbations: bounded comparator directions use the directional moment, while Gaussian directions average to the total noise energy.

Conditional rejection then supplies a finite $V$ while controlling the induced bias through $s$. Combining the two estimates and choosing the rejection threshold yields $A/T^q+s(\ell/T)^q$. Every rejected draw counts toward the oracle budget. A proximal conversion realizes the online decisions as feasible queries, so the construction applies to the stated composite model.

\paragraph{General convex and strongly convex consequences.}
For general convex objectives, the common procedure directly attains the linear-in-$D$ error in \cref{eq:intro-rates}. For strongly convex objectives, decreasing-radius localization converts the same guarantee into the quadratic statistical rate. Allocating failure probabilities according to stage cost avoids an additional $\log T$ or $\log\log T$ loss. Thus both curvature regimes retain the smaller directional confidence scale. The finite-budget lower bound complements these upper bounds through a common construction that retains the Gaussian term and the transition imposed by initialization; its leading heavy-tail orders agree with \citet{liu2026}.

\paragraph{Scope and organization.}
The contribution is a matching characterization of the statistical terms, together with explicit finite-budget upper and lower bounds. The algorithm knows the moment and geometry parameters and may perform unrestricted internal computation; its Gaussian-mixture representation can be exponentially large in dimension. We do not claim computational efficiency or matching deterministic transients in every fixed dimension. \Cref{sec:model} states the model and unified theorem, \cref{sec:upper-overview,sec:lower-overview} explain the upper and lower mechanisms, and \cref{sec:related} places the results in the literature. Complete proofs are included in the appendix.

\section{Model and Main Results}\label{sec:model}

\subsection{Composite objectives and fresh stochastic subgradients}
Fix $1<p\le2$, $G,D>0$, $0<s\le L$, $\mu\ge0$, and a finite dimension $d$. An instance consists of a known nonempty closed convex set $\mathcal X\subseteq\R^d$, an unknown convex function $f$ that is $G$-Lipschitz on $\mathcal X$, and a known convex regularizer $r$ that is $\mu$-strongly convex on $\mathcal X$. Let $h=r+\iota_{\mathcal X}$ be proper, closed, and convex, where $\iota_{\mathcal X}$ is the indicator function of the set. We assume exact access to $\prox_{\eta h}$ for every $\eta>0$. A minimizer $x^*$ of $F=f+r$ exists, and the supplied feasible point $x_1$ satisfies $\norm{x_1-x^*}\le D$. The set and regularizer are known mathematical objects, and deterministic operations on them are uncharged. The domain may be unbounded, and the objective need not be smooth.

At each query $x\in\mathcal X$, the oracle returns a vector $g(x,\xi)$ from a fixed query-dependent kernel with mean in $\partial f(x)$. Samples are fresh and independent of the preceding history conditional on the query; repeated calls at a frozen query use independent copies of that kernel. If $\mathscr F_{t-1}$ is the history before the $t$th raw call and $\E_{t-1}=\E[\,\cdot\mid\mathscr F_{t-1}]$, the centered noise $\zeta_t=g_t-\E_{t-1}g_t$ obeys the two bounds
\begin{equation}\label{eq:native}
 \E_{t-1}\norm{\zeta_t}^p\le L^p,
 \qquad \sup_{\norm v=1}\E_{t-1}|\ip v{\zeta_t}|^p\le s^p.
\end{equation}
Every raw observation is charged, including observations discarded by rejection sampling. Function values and access to reusable random functions are unavailable. Algorithms may choose queries adaptively, randomize internally, and return any feasible point. The public inputs include $G,D,L,s,p,\mu,T,\delta$ and the known set and regularizer. Our information bound imposes no restriction on internal computational cost.

Let $\mathfrak I_\mu$ be this class of instances and $\mathcal A_T$ the algorithms that use at most $T$ raw calls almost surely. With $F_I^*=\min F_I$, define
\begin{equation}\label{eq:minimax}
 \cE_T^{(\mu)}(\delta)=
 \inf_{\mathcal A\in\mathcal A_T}\inf\left\{
 \epsilon\ge0:\sup_{I\in\mathfrak I_\mu}
 \Pp_I\{F_I(\widehat x_{\mathcal A})-F_I^*>\epsilon\}\le\delta
 \right\}.
\end{equation}
Probability includes oracle and algorithm randomness. Dependence on the other public parameters and $d$ is suppressed. We write $\cE_T^{\rm cvx}=\cE_T^{(0)}$ and $\cE_T^{\rm sc}=\cE_T^{(\mu)}$ when $\mu>0$.

\subsection{Sharp rates in the two curvature regimes}
Throughout the paper, let
\begin{equation}\label{eq:notation}
 q=1-\frac1p,\quad r_p=\frac1q,\quad
 e=\frac{L^2}{s^2},\quad A=se^q,\quad
 \ell=\log\frac3\delta,\quad K=e+\ell.
\end{equation}
The moment ratio $e$ need not equal the ambient dimension. We first state the sharp statistical rates, then give the full finite-budget bounds.

\begin{theorem}[Sharp statistical rates]\label{cor:heavy}
Let $u_T=s(K/T)^q$. Suppose $d\ge\lceil e\rceil$, $u_T\le G$, and $G/\sqrt T+G\ell/T+L/\sqrt T\le C_0u_T$ for a fixed $C_0$. Then
\[
 \cE_T^{\rm cvx}(\delta)\asymp_{p,C_0}
 \frac{D(A+s\ell^q)}{T^q}.
\]
If~also $\mu>0$ and $u_T\le\mu D$, then
\[
 \cE_T^{\rm sc}(\delta)\asymp_{p,C_0}
 \frac{A^2+s^2\ell^{2q}}{\mu T^{2q}}.
\]
\end{theorem}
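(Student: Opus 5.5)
The statement combines two finite-budget bounds: an upper bound from the rejection-sampled Gaussian-perturbed online learner described in the introduction, and a lower bound with the order of \citet{liu2026}. Under the domination hypotheses, I would reduce both bounds to the displayed statistical terms.

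\textbf{Upper bound, general convex case.} Apply the online inequality to the rejection-preprocessed oracle with threshold $\tau$. After conditional rejection, the second moment is $V\lesssim L^p\tau^{2-p}$, the directional scale is $s_0\lesssim s$, and the bias is at most $s^p\tau^{1-p}$. Every rejected draw counts toward the oracle budget. Because the rejection probability is $\le (L/\tau)^p$, which is small when $\tau\gtrsim L$, the number of accepted samples is $n\asymp T$ with probability $1-\delta/3$. This follows from a Bernstein/Chernoff bound, and it is where a $G\ell/T$ term appears. The proximal conversion turns average regret over the radius-$D$ ball into objective error. This gives error
\[
D\Bigl[\sqrt{L^p\tau^{2-p}/T}+s^p\tau^{1-p}+s(\ell/T)^q\Bigr]+\text{deterministic terms}.
\]
Balancing the first two terms takes $\tau\asymp s(eT)^{1/p}$. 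This gives $\sqrt{L^p\tau^{2-p}/T}\asymp s e^{q}T^{-q}$, since $L^p=s^pe^{p/2}$ and $\tau^{2-p}=s^{2-p}(eT)^{2/p-1}$. So the error is $D(A+s\ell^q)/T^q$ plus $G/\sqrt T$, $G\ell/T$, $L/\sqrt T$ and $\tau$-saturation terms. When $\tau$ would exceed $G$, I would cap it at $G$, and the hypothesis $u_T\le G$ keeps the cap inactive at leading order. The hypothesis $G/\sqrt T+G\ell/T+L/\sqrt T\le C_0u_T$ absorbs the deterministic terms. Finally, $u_T=s(K/T)^q\asymp_p (A+s\ell^q)/T^q$, because $(e+\ell)^q\asymp e^q+\ell^q$ for $q\le 1/2$.

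\textbf{Upper bound, strongly convex case.} Run localization. Stage $k$ uses radius $R_k=D2^{-k}$ and budget $T_k$, and the convex guarantee gives error $\lesssim R_k u_{T_k}$. Strong convexity turns this into distance $R_{k+1}^2\lesssim R_ku_{T_k}/\mu$. Stop once $R_k\asymp u_T/\mu$; the condition $u_T\le\mu D$ ensures at least one stage. The final error is then $\mu R^2\asymp u_T^2/\mu\asymp (A^2+s^2\ell^{2q})/(\mu T^{2q})$. Two choices avoid a $\log T$ loss. First, geometric budgets with $T_k$ increasing toward the last stage make the total sum $\lesssim T$. Second, failure probabilities $\delta_k=\delta 2^{-(K_0-k)}$ put the largest confidence on the final stages, so $\log(1/\delta_k)$ adds only $O(K_0-k)$, which the geometric decrease of earlier-stage contributions absorbs.

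\textbf{Lower bound.} I would invoke the finite-budget lower bound referenced in the introduction, which is the construction of \citet{liu2026}, valid for $d\ge\lceil e\rceil$. That construction has an $A$ term, a Gaussian $\sqrt{L^2/T}$-type term, and a confidence term $s\ell^q/T^q$, truncated by the amplitude caps $G$ and $\mu D$. The hypotheses $u_T\le G$ and $u_T\le\mu D$ make these caps inactive, so the lower bound reduces to the stated orders with constants depending on $p$ and $C_0$.

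The main obstacle is the strongly convex upper bound: I must check that the budget and confidence allocation across stages produces no extra $\log T$ or $\log\log T$, and that the rejection-count concentration holds uniformly across stages.
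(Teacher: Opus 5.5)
Your lower-bound step and final reduction are fine, but your re-derivation of the upper bound breaks at the threshold step. The paper itself does not re-derive anything: it applies \cref{thm:main} directly. Your hypotheses give $u_T\le w_T\le v_T\le(1+C_0)u_T$ and $u_T\le G$. Monotonicity of $\cH$ and \cref{eq:Hscale} then yield $c_p\cH(u_T)\le\cE_T^{(\mu)}(\delta)\le C_p(1+C_0)^2\cH(u_T)$, and the branches $\cH(a)=Da$ and $\cH(a)=a^2/(2\mu)$ (for $a\le\mu D$) finish.

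\textbf{The bias and the threshold.} The accepted-law bias is not $s^p\tau^{1-p}$. H\"older gives $|\ip v{b'-b}|\le s\,a_0^q/(1-a_0)$ with $a_0=\Pp\{\norm Y>\tau\}$. The rejection event is defined by the norm, so $a_0$ is controlled only by $L$: $a_0\le(2L/\tau)^p$. Hence the bias is of order $s(L/\tau)^{p-1}$ (\cref{lem:accept}), larger than yours by $e^{(p-1)/2}$. Your version cannot be right: balanced against the trace term it would give amplitude $se^{(p-1)/2}T^{-q}$. For $p<2$ that is below the lower bound $A/T^q=se^qT^{-q}$.

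Separately, your choice $\tau\asymp s(eT)^{1/p}$ does not give the claimed identity. In fact $\sqrt{L^p\tau^{2-p}/T}=se^{p/4+1/p-1/2}T^{-q}$, which exceeds $se^qT^{-q}$ by the factor $e^{(2-p)(4-p)/(4p)}$. So the identity holds only at $p=2$, and even with your own formulas the two terms are not balanced. The working choice is $\tau\asymp\max\{4G+L,\,L(T/e)^{1/p}\}$. Then $\sqrt{L^p\tau^{2-p}/T}$ and $s(L/\tau)^{p-1}$ both equal $AT^{-q}$, up to a remainder of order $(G+L)/\sqrt T$ that your domination hypothesis absorbs.

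\textbf{Two further steps fail as written.} First, the threshold can never be capped at $G$. It must be at least of order $G+L$, so that rejection is rare and the accepted mean stays $O(G)$. The cap at $G$ belongs to the error amplitude and comes from a zero-query proximal point. That cap also covers $T<C_p\ell$, where your Chernoff count of accepted samples is unavailable; your hypotheses only give $T\ge\ell/C_0$.

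Second, in the strongly convex localization, halving from $R_0=D$ breaks down when $D\gg G/\mu$. There are about $\log_2(\mu D/u_T)$ stages. Each needs $\gtrsim\ell$ accepted rounds for the convex guarantee to hold with high probability, so the total budget is unbounded in $D$. The paper first takes a free proximal step to a center within $\sqrt2G/\mu$ of $x^*$ (\cref{eq:warm}). After that, your backward confidence allocation and geometric budget sums agree with \cref{lem:phase-sum}.
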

\begin{proof}
Use $s(e+\ell)^q\asymp_p A+s\ell^q$, \cref{eq:Hscale}, and the relevant branch of \cref{eq:H} in \cref{thm:main}.
\end{proof}

\subsection{Finite-budget bounds and curvature transitions}
The statistical theorem isolates the main contribution. To retain the finite-variance term and describe initialization and saturation, define
\begin{equation}\label{eq:intro-envelope}
 w_T=\frac L{\sqrt T}+s\left(\frac{e+\ell}{T}\right)^q,
 \qquad \cH_{\mu,D}(a)=\max_{0\le r\le D}\left\{ar-\frac\mu2r^2\right\}.
\end{equation}
The envelope converts the common statistical amplitude $w_T$ into error in each curvature regime.

 We abbreviate $\cH_{\mu,D}$ by $\cH$. Its definition in \cref{eq:intro-envelope} gives
\begin{equation}\label{eq:H}
 \cH(a)=
 \begin{cases}
 Da,&\mu=0,\\
 a^2/(2\mu),&\mu>0,\ a\le\mu D,\\
 Da-\mu D^2/2,&\mu>0,\ a\ge\mu D.
 \end{cases}
\end{equation}
For~$\mu>0$, it lies between $\tfrac12\min\{Da,a^2/\mu\}$ and $\min\{Da,a^2/\mu\}$. For all $\mu\ge0$ and $0\le c\le1$,
\begin{equation}\label{eq:Hscale}
 c^2\cH(a)\le\cH(ca)\le c\cH(a).
\end{equation}
These elementary scaling inequalities let us combine the statistical lower bounds without losing the transition between the two curvature branches.

\begin{theorem}[Unified all-budget bounds]\label{thm:main}
For the model above, every integer $T\ge1$ and $0<\delta<1/10$, set
\begin{equation}\label{eq:new-v}
 w_T=\frac L{\sqrt T}+s\left(\frac KT\right)^q,
 \qquad v_T=\frac G{\sqrt T}+\frac{G\ell}{T}+w_T.
\end{equation}
There exist constants $c_p,C_p>0$, depending only on $p$, such that
\begin{equation}\label{eq:main}
 c_p\cH(\min\{G,w_T\})
 \le \cE_T^{(\mu)}(\delta)
 \le C_p\cH(\min\{G,v_T\}).
\end{equation}
The upper bound holds in every finite dimension. The lower bound holds for every $d\ge\lceil e\rceil$ and covers all adaptive randomized algorithms with the stipulated oracle access. Its instances have $\mathcal X=\R^d$ and the same known regularizer $r(x)=\mu\norm x^2/2$, including $r=0$ when $\mu=0$.
\end{theorem}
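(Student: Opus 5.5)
The proof splits into an upper bound, obtained from one algorithm, and a lower bound, obtained from one family of hard instances.

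\textbf{Upper bound, convex case.} First we reduce heavy-tailed noise to finite variance by conditional rejection with threshold $\tau$. At each query we redraw until $\norm{g}\le\tau$, or until a capped number of draws is reached. Every draw, accepted or rejected, is charged to the budget $T$. After truncation the total second moment satisfies $V\lesssim L^p\tau^{2-p}$. The bias in any fixed direction is at most $s^p\tau^{1-p}$ up to constants: by H\"older together with Markov on the norm, it is $\lesssim \E|\ip v\zeta|\ind\{\norm\zeta>\tau\}\le s(L/\tau)^{p-1}$. The directional $p$th moment of the truncated noise stays $\lesssim s^p$.

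Second, we feed the truncated estimates into the online learner that integrates over Gaussian perturbations, whose regret over a radius-$R$ ball is $R[\sqrt{V/n}+s_0(\ell/n)^q]$. We convert the regret to optimization error by online-to-batch with the proximal step for $h$. This conversion contributes the $G/\sqrt T+G\ell/T$ terms through martingale concentration of $\ip{\nabla f(x_t)-\bar g_t}{x_t-x^*}$, with increments bounded by $G R$.

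Third, we balance the three terms $\sqrt{L^p\tau^{2-p}/T}$, $s(L/\tau)^{p-1}$ and $s(\ell/T)^q$ by choosing $\tau\asymp L\,(T/e)^{1/p}$ in the $s$-normalized form. This yields $L/\sqrt T+s(e/T)^q+s(\ell/T)^q\lesssim w_T$, so the error is $D\,v_T$. The bound $\min\{G,\cdot\}$ comes for free: returning $x_1$ already gives error at most $GD$.

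\textbf{Upper bound, strongly convex case.} We localize in stages with radii $R_k=D2^{-k}$, each stage using budget $T_k$. We run the convex procedure until $\mu R_k^2\gtrsim R_k v_{T_k}$, which gives the quadratic branch $v^2/\mu$. The failure probabilities are allocated as $\delta_k\propto \delta\,2^{-(K-k)}$, weighted toward the cheap early stages. Then $\ell_k=\ell+O(K-k)$, the sums are geometric and dominated by the last stage, and no $\log T$ factor appears. The saturated branch $Da-\mu D^2/2$ is simply the zero-stage case. Combining both branches gives $\cH(\min\{G,v_T\})$.

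\textbf{Lower bound.} Since $\cH$ is monotone and satisfies \cref{eq:Hscale}, it suffices to prove three separate lower bounds of order $\cH(\min\{G,a\})$, one for each of $a=L/\sqrt T$, $a=s(e/T)^q$ and $a=s(\ell/T)^q$, and then take the maximum. Each uses $f(x)=\ip\theta x$, restricted to the ball via $\min$ with $G$-Lipschitz clipping, plus $r=\mu\norm x^2/2$. The minimizer is then at distance $\min\{D,|\theta|/\mu\}$, so the error is exactly $\cH(|\theta|)$.

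\emph{Gaussian term.} Take $\theta$ ranging over $\pm\epsilon e_i$ on the $\lceil e\rceil$ coordinates with Gaussian noise of variance $s^2$ per coordinate; this satisfies both moment conditions. Assouad's lemma then gives $\epsilon\asymp s\sqrt{e/T}=L/\sqrt T$.

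\emph{Heavy-tail dimensional term.} In each coordinate, the oracle reveals the sign of $\theta$ only through rare spikes of size $M$ occurring with probability $\rho$. The constraint $\rho M^p\lesssim s^p$ keeps the directional moment $\le s^p$, and the spikes spread across $e$ coordinates keep the norm moment $\le L^p$. Each coordinate receives about $T/e$ samples. Taking $\rho\asymp e/T$ makes information scarce, and gives $\epsilon=\rho M\asymp s(e/T)^q$ via Fano or Assouad.

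\emph{Confidence term.} A one-dimensional two-point test with spike probability $\rho\asymp\ell/T$ has probability $\ge\delta$ that no spike is seen. Hence $\epsilon\asymp s(\ell/T)^q$.

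\textbf{Main obstacle.} I expect the hardest step to be the online inequality for the Gaussian-perturbed learner, which is taken as given here; within this proof, the delicate part is the directional bias/variance calibration in the rejection step. The adaptive rejection loop must respect the budget almost surely: I would cap the number of redraws per query and count capped failures as a low-probability event inside the $\ell$ allocation.
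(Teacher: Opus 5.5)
Your outline follows the paper's architecture: conditional rejection with charged redraws, a Gaussian-perturbation online learner, $\tau\asymp L(T/e)^{1/p}$, halved radii with backward confidence allocation, and three lower-bound components. However, the central online inequality is taken as given, and several steps you do state would fail.

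\textbf{Upper bound.} The decisive issue is how you connect the online inequality to the objective. In the paper, \cref{lem:online} bounds $\frac1n\sum_t\ip{b_t}{z_t-u}$ directly for the \emph{predictable means} $b_t$. The noise is absorbed inside the mixture analysis by the clipped exponential estimate of \cref{lem:mixed-exp} and two nonnegative supermartingales, with Gaussian width $a=R/\sqrt{\ell_\alpha}$. This is why $\sqrt V$ never meets a confidence factor. The transfer $F(y)-F^*\le\ip b{z-x^*}+GR/(2n)$ is then deterministic. The terms $G/\sqrt n+G\ell/n$ come from the mean bound $M\le4G+B_\tau$, via the $\beta$-terms and $\eta\le1/(16MR)$.

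Your extra martingale step is either vacuous or harmful. Azuma with increments of size $GR$ gives $GR\sqrt{\ell/n}$, which is not $O(Rv_T)$; take $\ell\approx\sqrt T$ and $L,s\ll G$. If your regret bound concerns realized observations, passing to means requires concentrating $\sum_t\ip{\xi_t}{z_t-x^*}$ for the truncated noise. That costs $R\sqrt{V_\tau\ell/n}+R\tau\ell/n$, i.e.\ $RA\sqrt\ell/T^q$ for your $\tau$. This is exactly the confidence-times-total-moment term the theorem removes.

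Three further steps fail as written.
\begin{itemize}
\item A per-query redraw cap must be of order $\log(n/\delta)$ to make a union bound over rounds small when the rejection probability is of constant order. This happens, e.g., for $T\lesssim e$, where $\tau$ must be of order $G+L$. Since the budget must hold almost surely, this shrinks $n$ by a logarithmic factor. The paper instead caps the \emph{total} at $4n$ trials and bounds exhaustion by $2^{4n}16^{-3n}$, which needs only $n\gtrsim\ell$.
\item Returning $x_1$ does not give error $GD$, because $r$ need not be Lipschitz; the paper uses $\prox_{(D/G)h}(x_1)$. The virtual decisions also need feasible queries with a correction $a\in\partial h(y)$ of norm $O(G)$, obtained by the proximal-parameter search. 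Otherwise the mean bound and the admissible threshold are no longer $O(G+L)$.
\item For $\mu>0$, halving from $R_0=D$ without the zero-query warm start to radius $\sqrt2G/\mu$ adds about $\log_2(\mu D/G)$ sampling stages. That cost is not bounded uniformly in $D$ for fixed $T$.
\end{itemize}

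\textbf{Lower bound.} Linear losses do not deliver the loss-to-testing step. On $\mathcal X=\R^d$ with $\mu=0$, $\ip\theta x$ has no minimizer. A non-separable floor such as $\max\{\ip{\theta_v}x,-\norm{\theta_v}D\}$, with $\theta_v=\epsilon v/\sqrt k$, has a half-space of minimizers. The output $-\sqrt kD\,\widehat v$ is then exactly optimal for every $v$ with $\Ham(\widehat v,v)<k/2$. So an algorithm beating random guessing on each sign by a constant margin has zero error with high probability once $k$ is large, and no Assouad bound survives.

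Saying the error is $\cH(\norm\theta)$ only evaluates the initial point. What is needed is $F_v(x)-F_v^*\ge\frac{\cH(a)}k\Ham(\widehat v(x),v)$ for \emph{every} $x$. The paper gets this from the separable loss $f_v(x)=\frac a{\sqrt k}\sum_j|x_j-\rho v_j|$, with $\rho=R_a/\sqrt k$ and minimizer $(\rho v,0)$ at distance $R_a\le D$. A ball domain would also work for $\mu=0$, but it contradicts the stated $\mathcal X=\R^d$.

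Several normalizations also need fixing.
\begin{itemize}
\item The Gaussian family must be the hypercube $\theta_v\propto v/\sqrt k$. The $2k$ points $\pm\epsilon e_i$ with Fano give only $s\sqrt{\log e/T}$.
\item The per-coordinate variance should be $L^2/\lceil e\rceil$, not $s^2$. With $s^2$, $\E\norm\zeta^2=\lceil e\rceil s^2$ can exceed $L^2$.
\item The erasure construction needs spike probability $1-u$ with $u^T=4\delta$, not $\asymp\ell/T$, which is not a probability once $\ell\gtrsim T$. The bound $J_p(e^{-z})^{-1/p}\ge c_pz^q$ then covers all $T$.
\end{itemize}
Your random-coordinate spike heuristic for the $A/T^q$ term, with spikes rescaled to keep the oracle unbiased, is a workable alternative to the paper's independent masks with $\alpha=1/(8T)$ on all $k$ coordinates.
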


For general convex objectives, \cref{eq:main} is the sandwich $c_pD\min\{G,w_T\}\le\cE_T^{\rm cvx}(\delta)\le C_pD\min\{G,v_T\}$. Under strong convexity it gives, in particular,
\begin{equation}\label{eq:expanded}
 \cE_T^{\rm sc}(\delta)\le\frac{C_p}{\mu}
 \left[\frac{G^2+L^2}{T}+\frac{G^2\ell^2}{T^2}
       +\frac{A^2+s^2\ell^{2q}}{T^{2q}}\right].
\end{equation}
The envelope is sharper than this expanded expression when initialization or saturation dominates. The deterministic nonsmooth term $G/\sqrt T$ and the confidence transient $G\ell/T$ occur only in the upper amplitude. Thus \cref{thm:main} leaves the deterministic finite-budget complexity unresolved.

\begin{corollary}[Matching statistical regime]\label[corollary]{cor:matching}
Suppose $d\ge\lceil e\rceil$. If $w_T\ge G$, then $\cE_T^{(\mu)}(\delta)\asymp_p\cH(G)$. If, for a fixed $C_0>0$,
\begin{equation}\label{eq:match-cond}
 \frac G{\sqrt T}+\frac{G\ell}{T}\le C_0w_T,
\end{equation}
then $\cE_T^{(\mu)}(\delta)\asymp_{p,C_0}\cH(\min\{G,w_T\})$. A sufficient condition for \cref{eq:match-cond} with an absolute $C_0$ is $L\ge G$ and $T\ge(G/L)^2\ell^2$.
\end{corollary}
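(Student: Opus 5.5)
The corollary is read off from \cref{thm:main} together with the scaling inequalities \cref{eq:Hscale} and the monotonicity of $\cH$. Since $\cH(a)=\max_{0\le r\le D}\{ar-\mu r^2/2\}$ is a maximum of functions nondecreasing in $a\ge0$, $\cH$ is nondecreasing. We also use $w_T\le v_T$ throughout.

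\emph{Case $w_T\ge G$.} Here $\min\{G,w_T\}=G$, so the lower bound in \cref{eq:main} gives $\cE_T^{(\mu)}(\delta)\ge c_p\cH(G)$. Since $v_T\ge w_T\ge G$, also $\min\{G,v_T\}=G$, and the upper bound gives $\cE_T^{(\mu)}(\delta)\le C_p\cH(G)$. Hence $\cE_T^{(\mu)}(\delta)\asymp_p\cH(G)$.

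\emph{Case \cref{eq:match-cond}.} The condition gives $v_T\le(1+C_0)w_T$. Set $c=1/(1+C_0)\in(0,1]$. We first claim $\min\{G,v_T\}\le(1+C_0)\min\{G,w_T\}$. If $w_T\le G$, then $\min\{G,v_T\}\le v_T\le(1+C_0)w_T$. If $w_T>G$, then $\min\{G,v_T\}=G\le(1+C_0)G$. Applying monotonicity and then the right-hand inequality of \cref{eq:Hscale} with $a=\min\{G,v_T\}$ gives
\[
 \cH(\min\{G,w_T\})\ge\cH(c\min\{G,v_T\})\ge c^2\cH(\min\{G,v_T\}).
\]
Combining this with \cref{eq:main},
\[
 c_p\cH(\min\{G,w_T\})\le\cE_T^{(\mu)}(\delta)\le C_p(1+C_0)^2\cH(\min\{G,w_T\}),
\]
which is the claim $\cE_T^{(\mu)}(\delta)\asymp_{p,C_0}\cH(\min\{G,w_T\})$.

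\emph{Sufficient condition.} Assume $L\ge G$ and $T\ge(G/L)^2\ell^2$. Then $G/\sqrt T\le L/\sqrt T\le w_T$. The second assumption gives $\sqrt T\ge G\ell/L$, so $G\ell/T=(G\ell/\sqrt T)(1/\sqrt T)\le L/\sqrt T\le w_T$. Adding the two bounds, \cref{eq:match-cond} holds with $C_0=2$.

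There is no real obstacle in this argument. The only point needing care is the direction of \cref{eq:Hscale}: we apply its right-hand inequality $c^2\cH(a)\le\cH(ca)$ with $c\le1$. Scaling up by $1+C_0>1$ is not covered by \cref{eq:Hscale}, so we scale $\min\{G,v_T\}$ down instead, as above.
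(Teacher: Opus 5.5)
Your proof is correct and follows the paper's own argument: you compare the two sides of \cref{eq:main} using the monotonicity of $\cH$ and \cref{eq:Hscale}, making the constant explicit through $v_T\le(1+C_0)w_T$ and $C_p(1+C_0)^2$, and for the sufficient condition you bound each transient by $L/\sqrt T$. One small slip: the inequality $c^2\cH(a)\le\cH(ca)$ you invoke is the left-hand inequality in \cref{eq:Hscale}, not the right-hand one, but since you write it out explicitly this does not affect the argument.
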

\begin{proof}
Monotonicity and \cref{eq:Hscale} compare the two sides of \cref{eq:main}. Under the sufficient condition, each transient is at most $L/\sqrt T$.
\end{proof}

In particular, if $\mu>0$, \cref{eq:match-cond} holds, and $w_T\le\min\{G,\mu D\}$, the full quadratic statistical rate is
\[
 \cE_T^{\rm sc}(\delta)\asymp_{p,C_0}
 \frac1\mu\left[\frac{L^2}{T}
       +\frac{A^2+s^2\ell^{2q}}{T^{2q}}\right].
\]
This retains the finite-variance term even when it is not dominated by the heavy-tail terms.

\Cref{tab:comparison} compares the closest two-moment bounds with earlier norm-moment and finite-variance benchmarks, marking differences in their assumptions. The improvement in the displayed statistical rates is in the upper bounds: they attain the leading lower-bound orders already established by \citet{liu2026}. Our lower bound also retains finite-budget terms and the initialization-dependent envelope in \cref{thm:main}, which are not displayed in this statistical comparison. The earlier expected-error bounds of \citet{vural2022} concern a different success criterion. The finite-variance results of \citet{parletta2024,gorbunov2024} and the improved anytime analysis of \citet{parletta2025} provide further context in \cref{sec:related}.

The heavy-tail rates make the effective-dimension gain explicit:
\begin{equation}\label{eq:effective}
 \frac{DL}{T^q}\left[e^{1/2-1/p}+\frac{\ell^q}{\sqrt e}\right],
 \qquad
 \frac{L^2}{\mu T^{2q}}\left[e^{1-2/p}+\frac{\ell^{2q}}e\right].
\end{equation}
Their confidence terms gain factors $e^{q/2}$ and $e^q$, respectively, relative to the prior upper bounds when confidence dominates. For $p=2$, the statistical amplitude is comparable to $(L+s\sqrt\ell)/\sqrt T$; its two moment scales retain distinct roles even at the finite-variance endpoint.

\paragraph{A common benchmark.}
Take $p=3/2$, $G=D=L=1$, $s=e^{-1/2}$, $\delta=\exp(-e)$, and $d\ge\lceil e\rceil$. For $T\ge C e^{7/4}$ and sufficiently large $e$, the convex rate is $\Theta(e^{-1/6}T^{-1/3})$ and, with $\mu=1$, the strongly convex rate is $\Theta(e^{-1/3}T^{-2/3})$. These follow from the same amplitude $u_T\asymp e^{-1/6}T^{-1/3}$; the confidence transient $e/T$ determines the displayed budget threshold. The resulting small-error sample complexities are $\Theta(e^{-1/2}\epsilon^{-3})$ and $\Theta(e^{-1/2}\epsilon^{-3/2})$, respectively. \Cref{app:consequences} gives the calculation and the general sample-complexity bounds.

\section{Upper Bound via Mixed-Moment Online Learning}\label{sec:upper-overview}

The central ingredient is a convex optimization procedure whose confidence cost depends on the directional moment. We first explain the online estimate it needs, then show how rejection and proximal access supply admissible observations. Strong convexity is used only after this common procedure has been established.

The procedure maintains virtual decisions, converts them into feasible proximal queries, and uses rejection sampling to obtain observations for the online update. It returns the average of the associated feasible points, counting every rejected draw toward a fixed oracle budget. One invocation handles general convex objectives; localization restarts the same procedure for strongly convex objectives. The following subsections explain why these steps preserve the two moment scales; \cref{app:online,sec:rejection} give the precise updates and parameter choices.

\subsection{An online inequality with two different moment controls}
Consider observations $Y_t=b_t+\xi_t$, where the conditional mean $b_t$ is predictable and has norm at most $M$. Assume the processed noise has total second moment at most $V$ and directional $p$th moments at most $s_0^p$. The raw oracle need not satisfy this second-moment assumption. For decisions in a ball of radius $R$, \cref{lem:online} proves that every fixed comparator $u$ satisfies, with probability at least $1-\alpha$,
\begin{equation}\label{eq:overview-online}
 \frac1n\sum_{t=1}^n\ip{b_t}{z_t-u}
 \le C_pR\left[\frac{M+\sqrt V}{\sqrt n}
 +\frac{M\log(3/\alpha)}n
 +s_0\left(\frac{\log(3/\alpha)}n\right)^q\right].
\end{equation}
The estimate allows changing means and conditional noise laws. Its distinguishing feature is that $\sqrt V$ does not multiply the leading confidence factor.

To obtain this estimate, the learner maintains mixtures of Gaussian densities over virtual decisions. If $U=Z+aN$, with $Z$ supported on the decision ball and $N$ an independent standard Gaussian, the noise splits as
\[
 \ip\xi{U-x}=\ip\xi{Z-x}+a\ip\xi N.
\]
The first direction is bounded and uses the directional $p$th moment. For the second, $\E_N|\ip\xi N|^2=\norm\xi^2$, so only the total second moment enters. A scalar clipped exponential estimate combines these controls as
\[
 \E\exp(\pm E(U,\xi))
 \le\exp\{C_p[\eta^p(s_0R)^p+\eta^2a^2(V+M^2)]\},
\]
where~$E(U,\xi)=\chi(\eta\ip{b+\xi}{U-x})-\eta\ip b{U-x}$ and $\chi$ clips to $[-1,1]$. Entropy mirror updates then compare the learner to a Gaussian centered at the comparator. The initial divergence is at most $R^2/(2a^2)$, with no factor of $d$. This is why an exponentially large internal representation can yield a dimension-independent oracle bound. The construction is proved in \cref{app:online}, including the parameter choices.

\subsection{Conditional rejection and feasible proximal queries}
To use \cref{eq:overview-online}, we must create a finite second moment without losing the weak moment scale. At a frozen query, let $Y=b+\zeta$ be a corrected observation with $\norm b\le4G$. Accept it when $\norm Y\le\tau$, where $\tau\ge64(4G+L)$. If $Q$ is the accepted law, then \cref{lem:accept} shows
\[
\begin{array}{@{}>{\displaystyle}l@{\qquad}>{\displaystyle}l@{}}
 \Pp\{\text{reject}\}\le1/16,
 &\norm{\E_QY-b}\le C_ps(L/\tau)^{p-1},\\[6pt]
 \E_Q\norm{Y-\E_QY}^2\le C_pL^p\tau^{2-p},
 &\sup_{\norm v=1}\E_Q|\ip v{Y-\E_QY}|^p\le C_ps^p.
\end{array}
\]
Centering is essential for the bias bound: the rejected directional mean is bounded by H\"older's inequality using $s$, even though the rejection event is defined by the norm. Combining these estimates with the online inequality gives the error amplitude
\begin{equation}\label{eq:overview-amplitude}
 \frac{G+\sqrt{L^p\tau^{2-p}}}{\sqrt n}
 +\frac{G\log(3/\alpha)}n
 +s\left(\frac{\log(3/\alpha)}n\right)^q
 +s(L/\tau)^{p-1}.
\end{equation}
There is no threshold-times-confidence term $\tau\log(3/\alpha)/n$. Taking $\tau$ proportional to $\max\{4G+L,L(T/e)^{1/p}\}$ balances the new second moment and the bias and yields $v_T$.

The corrected observation must also be available under the stated composite oracle. Given a virtual point $z$ in a ball containing $x^*$, a short search over proximal parameters produces a feasible point $y$, a nearby relative-interior query $w$, and a bounded predictable correction $a$. The transformed observation $Y=\Pi g(w,\xi)+a$, where $\Pi$ projects onto the affine hull's translation space, has bounded mean and satisfies
\[
 F(y)-F^*\le\ip{\E Y}{z-x^*}+\frac{GR}{2n}.
\]
Projection and predictable correction preserve the centered moment bounds. Thus averaging the feasible points transfers online regret to objective error without assuming a proximal map for an additional ball constraint.

Each accepted round may require several raw calls. The procedure permits at most $4n$ calls for $n$ accepted rounds; conditional rejection probability at most $1/16$ makes exhaustion exponentially unlikely. The analysis couples this capped procedure to an ideal accepted process instead of conditioning concentration on eventual completion. The resulting convex primitive is proved in \cref{sec:rejection}.

\subsection{Curvature and confidence allocation}
For $\mu=0$, one invocation of the primitive gives the upper bound, together with a zero-query proximal cap of order $GD$. For $\mu>0$, strong convexity implies $F(x)-F^*\ge\mu\norm{x-x^*}^2/2$. An error guarantee of order $\mu R^2$ therefore certifies a smaller radius, allowing the primitive to be restarted at successively halved radii.

If there are $J$ stages, set $R_j=2^{-j}R_0$ and $\delta_j=\delta\,2^{j-J-1}$. Then $\log(3/\delta_j)=\ell+(J-j+1)\log2$. Earlier stages receive smaller failure probabilities but have geometrically smaller sample costs. The resulting geometric sums absorb the extra $J-j$ factor without a horizon logarithm. Combining the localized guarantee, the unlocalized convex guarantee, and zero-query preprocessing gives the envelope $\cH$. All choices are made from public parameters; selecting among these procedures does not require observing the unknown objective value. Details appear in \cref{sec:upper-full}.

\section{Lower Bound via Adaptive Testing}\label{sec:lower-overview}

The leading heavy-tail and confidence orders in this section are already present in the lower bounds of \citet{liu2026}. We give a unified construction to retain the additional finite-budget Gaussian term and the initialization-dependent curvature envelope. The construction converts hidden coordinate signs into objective error in both curvature regimes. For $k\le d$, $v\in\{-1,1\}^k$, and $0<a\le G$, define
\[
 f_v(x)=\frac a{\sqrt k}\sum_{j=1}^k|x_j-\rho v_j|,
 \qquad r(x)=\frac\mu2\norm x^2,
 \qquad \rho=\frac{R_a}{\sqrt k},
\]
where~$R_a=D$ if $\mu=0$ and $R_a=\min\{D,a/\mu\}$ otherwise. The minimizer is $(\rho v,0)$, within distance $D$ of the origin. A wrong output sign in coordinate $j$ costs at least $\cH(a)/k$. Thus the same loss-to-testing reduction yields a linear cost $Da$ for convex objectives and a quadratic cost $a^2/\mu$ in the strongly convex quadratic branch.

Three oracle constructions force complementary choices of $a$. Gaussian noise in $k=\lceil e\rceil$ coordinates yields $a$ of order $\min\{G,L/\sqrt T\}$: neighboring sign vectors have small conditional KL divergence at every query, so the chain rule controls the entire adaptive transcript. Independent Bernoulli masks, each revealing a coordinate with probability of order $1/T$, yield $a$ of order $\min\{G,A/T^q\}$. A constant fraction of signs remain completely hidden, even when query locations are adaptive. Finally, a one-coordinate erasure oracle makes all observations vanish with probability greater than $\delta$ under two different objectives. Their identical transcripts force the amplitude $\min\{G,s(\ell/T)^q\}$.

Each construction satisfies both centered moment bounds and uses the same known regularizer. Taking their maximum and applying the scaling of $\cH$ gives the lower side of \cref{thm:main}. The dimension condition is independent of $T$, and the proof includes internal randomization and arbitrary feasible outputs. \Cref{sec:lower-poly} verifies membership, transcript information, and the conversion to the minimax quantile.

\section{Related Work}\label{sec:related}

\paragraph{Convex optimization under moment assumptions.}
Classical oracle complexity separates information obtained from a function from the computation performed between queries \citep{nemirovskiyudin1983}; \citet{lan2020} develops the first-order and stochastic optimization framework. For infinite-variance noise, \citet{vural2022} analyze mirror descent and establish optimal stochastic convex rates under their moment assumptions. \citet{liuzhou2023} obtain high-probability, expected-error, and initial-distance-adaptive guarantees for nonsmooth convex optimization using clipped gradients. These works establish the underlying heavy-tail budget exponents. Our question concerns the dependence on two distinct centered moment scales and the confidence level. More recently, \citet{liuoco2026} establish optimal expected regret for classical online methods on bounded domains; this expectation guarantee does not determine the high-probability confidence cost.

\paragraph{Clipping and high-probability guarantees.}
\citet{zhangcutkosky2022} develop parameter-free high-probability regret bounds for heavy-tailed online convex optimization on unbounded domains. \citet{nguyen2023} refine high-probability analyses of clipped gradient methods, while \citet{parletta2024} study clipped subgradient schemes under finite variance. \citet{parletta2025} improve the anytime dependence for averaged clipped subgradient iterates on possibly unbounded domains and establish last-iterate rates in expectation. These guarantees do not separate the two moment scales considered here. \citet{gorbunov2024} address nonsmooth stochastic optimization with heavy-tailed noise; \citet{sadiev2023} cover smooth optimization and variational inequalities with bounded centered moments below order two. Recent work of \citet{ouwang2026} studies smooth Polyak--\L{}ojasiewicz optimization under a centered norm-moment bound, including dimension-dependent lower bounds. These settings clarify the roles of domain assumptions, smoothness, and moment order. The present online primitive instead separates a weak directional $p$th moment from a total second moment created by preprocessing.

\paragraph{Refined noise geometry and distributional structure.}
Weak directional moments also arise in robust mean estimation: \citet{cherapanamjeri2022} characterize optimal estimation without a finite variance for independent observations from a fixed distribution. \citet{tang2022} use robust estimation in smooth projection-free stochastic optimization. Neither result directly supplies an online guarantee for the changing subgradient means of our nonsmooth composite model.
\citet{marsden2025} study isotropic high-probability directional control and sub-exponential noise, as well as quantum sampling access. These stronger tail or different oracle assumptions distinguish their dimension-dependent guarantees from our centered $p$th-moment model.
\citet{das2024} obtain refined guarantees for streaming heavy-tailed estimation using clipped SGD, including improvements in the finite-variance setting. The closest comparison is \citet{liu2026}, which uses generalized effective dimension in nonsmooth composite optimization and supplies corresponding lower bounds; the quantitative gap is stated in \cref{sec:introduction}. Our upper algorithm attains the smaller confidence coefficient with unrestricted computation. A different direction is pursued by \citet{puchkin2024}, who use structured noise densities and smoothed median estimates to obtain faster convergence. Such additional distributional assumptions are not part of our two-moment class.

\paragraph{Information-theoretic optimization lower bounds.}
\citet{raginskyrakhlin2011} formulate black-box optimization through feedback information, and \citet{agarwal2012} develop information-theoretic lower bounds for stochastic convex optimization. Our proof follows this adaptive-transcript perspective. Under the two moment constraints, \citet{liu2026} already establish the leading heavy-tail statistical lower bounds. Our Gaussian, masked-coordinate, and erasure families recover these orders within a unified finite-budget envelope. Their nonlinear losses provide the required conversion from hidden information to objective error without treating subgradients at changing query points as repeated observations of a single mean.

\section{Discussion}\label{sec:discussion}
The two curvature regimes share the statistical amplitude $L/\sqrt T+s((e+\ell)/T)^q$. The moment ratio determines its confidence-independent part, while the weak directional scale determines its confidence cost. The envelope $\cH_{\mu,D}$ then captures the effect of curvature, initial distance, and saturation. This separates the statistical question from the optimization reduction and gives matching statistical rates throughout the stated curvature regimes.

The main unresolved issue is computational: the proof uses a large Gaussian-mixture family rather than an efficient clipped-gradient update. An efficient method with the same two-scale confidence dependence would strengthen the algorithmic interpretation. Parameter adaptation and the deterministic finite-dimensional terms are also beyond the present characterization.

\begingroup
\small
\setlength{\bibsep}{4pt}
\bibliographystyle{unsrtnat}
\bibliography{references}
\endgroup
\clearpage
\appendix
\section{The Common Online and Convex Optimization Primitive}\label{app:online}
This section proves the shared statistical ingredient. We first separate directional and Gaussian perturbations in an exponential estimate, then use that estimate in an online learner and transfer its regret to feasible composite optimization.

\subsection{A scalar clipping inequality}
Let $\chi(x)=\max\{-1,\min\{x,1\}\}$.
\begin{lemma}\label[lemma]{lem:mixed-scalar}
For real $t,u,v$, $\varepsilon\in\{-1,1\}$, and $1<p\le2$,
\begin{equation}\label{eq:mixed-scalar}
 e^{\varepsilon[\chi(t+u+v)-t]}
 \le1+\varepsilon(u+v)
 +C e^{2|t|}\left(|u|^p+v^2+t^2\ind_{\{|t|>1/2\}}\right),
\end{equation}
where an absolute constant such as $C=256$ suffices.
\end{lemma}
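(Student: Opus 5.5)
The plan is a case split on whether the clipping in $\chi$ can be active. Write $w=u+v$ and $y=\chi(t+w)-t$, so the left side of \cref{eq:mixed-scalar} is $e^{\varepsilon y}$. Two crude facts will be used throughout:
\begin{itemize}
\item $|y|\le 1+|t|$, since $|\chi|\le1$;
\item the right side is at least $1-|u|-|v|+C e^{2|t|}\bigl(|u|^p+v^2+t^2\ind_{\{|t|>1/2\}}\bigr)$.
\end{itemize}
The strategy is to show that either no clipping occurs and a second-order Taylor bound applies, or the bracket multiplying $Ce^{2|t|}$ is bounded below by an absolute constant. In the second situation that bracket absorbs both the exponential and the linear term.

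\emph{Small case:} $|t|\le1/2$, $|u|\le1/4$ and $|v|\le1/4$. Then $|t+w|\le1$, so $\chi(t+w)=t+w$ and $y=w$ with $|w|\le1/2$.
\begin{itemize}
\item The elementary inequality $e^x\le1+x+x^2$ for $|x|\le1$ gives $e^{\varepsilon w}\le1+\varepsilon w+w^2$.
\item Next, $w^2\le2u^2+2v^2$.
\item Since $|u|\le1$ and $p\le2$, we have $u^2\le|u|^p$.
\end{itemize}
Hence the left side is at most $1+\varepsilon(u+v)+2(|u|^p+v^2)$, which is dominated by the right side for any $C\ge2$ (as $e^{2|t|}\ge1$).

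\emph{Large case:} $|t|>1/2$, or $|u|>1/4$, or $|v|>1/4$. Let $S=|u|^p+v^2+t^2\ind_{\{|t|>1/2\}}$. First bound $S$ from below:
\begin{itemize}
\item if $|t|>1/2$, then $S\ge1/4$;
\item if $|u|>1/4$, then $S\ge|u|^p\ge4^{-p}\ge1/16$;
\item if $|v|>1/4$, then $S\ge v^2\ge1/16$.
\end{itemize}
So in all subcases $S\ge1/16$.

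Next, control the linear terms by $S$.
\begin{itemize}
\item For $u$: if $|u|\ge1/4$, then $|u|^{p-1}\ge(1/4)^{p-1}\ge1/4$, so $|u|\le4|u|^p\le4S$. If $|u|<1/4$, then $|u|\le1/4\le4S$.
\item For $v$, the same two-way argument gives $|v|\le4S$, using $v^2$ in place of $|u|^p$.
\end{itemize}

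It then suffices to verify
\[
 e^{1+|t|}+8S\le C e^{2|t|}S .
\]
Since $S\ge1/16$ and $e^{1+|t|}\le e\cdot e^{2|t|}$, the left side is at most $(16e+8)e^{2|t|}S$. The constant $16e+8<256$, so $C=256$ suffices.

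The main obstacle is bookkeeping rather than anything conceptual: $u$ and $v$ may nearly cancel while being individually large, so the case split must be made on $|u|$ and $|v|$ separately rather than on $|w|$. This also means the linear term $\varepsilon(u+v)$ must be absorbed using $|u|\lesssim|u|^p$ and $|v|\lesssim v^2$ whenever either is bounded away from zero. The factor $e^{2|t|}$ on the right is needed only to dominate $e^{|y|}\le e^{1+|t|}$ when $|t|$ is large; this is where the weight appears.
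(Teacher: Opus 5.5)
Your proof is correct, but it uses a different case decomposition from the paper's. The paper splits first on $|t|$.

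\begin{itemize}
\item For $|t|\le1/2$ it keeps the linear term exactly. With $z=u+v$, it bounds only the excess $e^{\varepsilon[\chi(t+z)-t]}-1-\varepsilon z$ by $C\min\{z^2,|z|\}$: Taylor when $|z|\le1/2$, and the crude bound $e^{3/2}+1+|z|$ otherwise. It then transfers $\min\{z^2,|z|\}$ to $4|u|^p$ or $4v^2$, according to which of $|u|,|v|$ is at least $|z|/2$.
\item For $|t|>1/2$ it uses $|u+v|\le|u|^p+v^2+5/4$ together with $t^2>1/4$.
\end{itemize}

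You instead split on $|t|,|u|,|v|$ simultaneously. You obtain the uniform floor $S\ge1/16$ on the whole complementary region, and absorb the possibly negative linear term there via $|u|,|v|\le4S$. Your route avoids the auxiliary function $\min\{z^2,|z|\}$ and the dominance subcase. It treats every ``large'' configuration at once and gives the better constant $16e+8$. The paper's route never has to absorb a negative linear term when $|t|\le1/2$, since only $z$ enters there.

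This also means your closing remark overstates the difficulty. Splitting on $|w|$ works perfectly well, as the paper shows, because near-cancellation of $u$ and $v$ only makes $z$ smaller. The separate split on $|u|$ and $|v|$ is needed only because you chose to absorb the linear term into the penalty.
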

\begin{proof}
First suppose $|t|\le1/2$ and write $z=u+v$. If $|z|\le1/2$, clipping does nothing and the excess of the left side over $1+\varepsilon z$ is at most $Cz^2$. If $|z|>1/2$, that excess is at most $e^{3/2}+1+|z|$, and hence at most $C\min\{z^2,|z|\}$. The same latter bound holds in the first case. Let $\rho(z)=\min\{z^2,|z|\}$. At least one of $|u|,|v|$ is at least $|z|/2$. If it is $|u|$, then
$\rho(z)\le4\rho(u)\le4|u|^p$; otherwise $\rho(z)\le z^2\le4v^2$. This proves the assertion for $|t|\le1/2$.

If $|t|>1/2$, the left side is at most $e^{1+|t|}$. Also
$|u+v|\le |u|^p+v^2+5/4$, while $t^2>1/4$ and
$e^{1+|t|}\le C e^{2|t|}t^2$ in this range. The remaining terms in \eqref{eq:mixed-scalar} therefore dominate both the possible negative linear term and this exponential bound. This proves the lemma.
\end{proof}

\subsection{A conditional Gaussian-mixture estimate}
Work in a finite-dimensional Euclidean space. For a ball $\mathcal B=B(c,R)$, let $U=Z+aN$, where $Z$ is supported on $\mathcal B$, $N$ is standard Gaussian, and they are independent. These are auxiliary integration variables, not oracle assumptions or stochastic calls.

\begin{lemma}\label[lemma]{lem:mixed-exp}
Suppose $x\in\mathcal B$, $\norm b\le M$, and a centered random vector $\xi$ satisfies
\begin{equation}\label{eq:mixed-assumptions}
 \E\norm\xi^2\le V,\qquad
 \sup_{\norm v=1}\E|\ip v\xi|^p\le s_0^p.
\end{equation}
If $U$ is independent of $\xi$, $0<a\le R$, and $\eta MR\le1/16$, then, for
$E(U,\xi)=\chi(\eta\ip{b+\xi}{U-x})-\eta\ip b{U-x}$,
\begin{equation}\label{eq:mixed-exp}
 \E e^{\pm E(U,\xi)}\le e^{\upsilon},\qquad
 \upsilon=C_p\{\eta^p(s_0R)^p+\eta^2a^2(V+M^2)\}.
\end{equation}
The assertion holds conditionally on an arbitrary past under the same conditional assumptions.
\end{lemma}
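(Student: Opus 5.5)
We apply \cref{lem:mixed-scalar} with the decomposition suggested in the overview. Write $U-x=(Z-x)+aN$. Set
\[
t=\eta\ip b{U-x},\qquad u=\eta\ip\xi{Z-x},\qquad v=\eta a\ip\xi N .
\]
Then $E(U,\xi)=\chi(t+u+v)-t$, so for $\varepsilon=\pm1$,
\[
e^{\varepsilon E}\le 1+\varepsilon(u+v)+Ce^{2|t|}\bigl(|u|^p+v^2+t^2\ind_{\{|t|>1/2\}}\bigr).
\]
We take expectations over $\xi$ first, conditionally on $U$ (equivalently on $(Z,N)$), which is legitimate because $U$ is independent of $\xi$. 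The linear term vanishes since $\xi$ is centered and $u,v$ are linear in $\xi$ for fixed $(Z,N)$. For the remaining terms, note that $\norm{Z-x}\le2R$ since both $Z$ and $x$ lie in $\mathcal B$. The directional moment assumption then gives
\[
\E_\xi|u|^p\le\eta^p(2R)^ps_0^p,
\]
and the total second moment gives
\[
\E_\xi v^2\le\eta^2a^2\norm N^2\cdot\E\norm\xi^2/\norm N^2\cdot\ldots
\]
More precisely, $\E_\xi v^2=\eta^2a^2N^\top(\E\xi\xi^\top)N$.

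Next we take expectations over $N$ and $Z$. The factor $e^{2|t|}$ depends only on $(Z,N)$. Since $|t|\le\eta M(2R+a|\ip{\hat b}N|)$ with $\hat b=b/\norm b$, we have
\[
|t|\le 1/8+\eta Ma|g|,\qquad g\ \text{a standard scalar Gaussian},
\]
using $\eta MR\le1/16$. The directional term needs only
\[
\E e^{2|t|}\le e^{1/4}\,\E e^{2\eta Ma|g|}\le C,
\]
because $\eta Ma\le1/16$. The Gaussian term needs
\[
\E_N\bigl[e^{2|t|}N^\top\Sigma N\bigr],\qquad \Sigma=\E\xi\xi^\top,\quad \operatorname{tr}\Sigma\le V.
\]
Here $e^{2|t|}$ depends on $N$ only through the scalar $\ip{\hat b}N$. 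I would split $N=g\hat b+N_\perp$: the orthogonal part is independent of $g$ and contributes at most $C\operatorname{tr}\Sigma$, and the parallel part is bounded by $\hat b^\top\Sigma\hat b\,\E[g^2e^{c|g|}]\le CV$. This yields $C\eta^2a^2V$.

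The last term, $t^2\ind_{\{|t|>1/2\}}$, can only be nonzero when $\eta Ma|g|>3/8$. On that event $t^2\le(4\eta Ma|g|)^2$ (because $1/8\le\eta Ma|g|/3$), so
\[
\E\bigl[e^{2|t|}t^2\ind\bigr]\le C\eta^2a^2M^2\,\E\bigl[g^2e^{|g|/8}\bigr]\le C\eta^2M^2a^2 .
\]
Collecting the three contributions gives $\E e^{\pm E}\le1+\upsilon\le e^{\upsilon}$, with $\upsilon=C_p\{\eta^p(s_0R)^p+\eta^2a^2(V+M^2)\}$. The constant $2^p$ is absorbed into $C_p$. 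The conditional version is the same computation carried out under $\E_{t-1}$, with $U$ conditionally independent of $\xi$.

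I expect the main obstacle to be the Gaussian factor $e^{2|t|}$. It is unbounded in $N$, and it is coupled with $v^2$ through the same Gaussian vector $N$. The orthogonal decomposition along $\hat b$ is what keeps this coupling dimension-free, so that only $\operatorname{tr}\Sigma\le V$ appears and no factor of $d$ enters.
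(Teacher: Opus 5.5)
Your proposal is correct and follows the paper's proof: the same choice of $t,u,v$ in \cref{lem:mixed-scalar}, the vanishing linear term, the bound $e^{2|t|}\le e^{1/4}e^{2\eta aM|g|}$ for the directional term, and the same reduction of the clipping-region term to $\eta^2a^2M^2\,\E[g^2e^{|g|/8}]$. The one difference is the Gaussian quadratic term. There the paper conditions on $\xi$ and applies Cauchy--Schwarz with $\E_N|\ip\xi N|^4=3\norm\xi^4$, which avoids your split of $N$ along $\hat b$ (and the degenerate case $b=0$). Your split also works once you note that the cross term $2g\,\hat b^\top\Sigma N_\perp$ has zero mean, because $N_\perp$ is centered and independent of $(Z,g)$.
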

\begin{proof}
Condition on $Z$ and put $v_0=Z-x$, so $\norm{v_0}\le2R$. In Lemma~\ref{lem:mixed-scalar} use
\[
 t=\eta\ip b{v_0+aN},\qquad
 u=\eta\ip\xi{v_0},\qquad v=\eta a\ip\xi N.
\]
For fixed $N$, the expected linear term is zero. Since
$e^{2|t|}\le e^{1/4}e^{2\eta a|\ip bN|}$ and $N$ is independent of $\xi$, the contribution of $|u|^p$ is at most $C_p\eta^p(s_0R)^p$.
For the quadratic contribution condition on $\xi$ and use Cauchy--Schwarz:
\[
 \E_N[e^{2\eta a|\ip bN|}|\ip\xi N|^2]
 \le(\E_Ne^{4\eta a|\ip bN|})^{1/2}
     (\E_N|\ip\xi N|^4)^{1/2}
 \le C\norm\xi^2.
\]
Only one-dimensional Gaussian moments occur. Thus this contribution is at most $C\eta^2a^2 V$.
Finally write $t=t_0+b_0N_1$, where $|t_0|\le1/8$ and $|b_0|\le\eta aM\le1/16$. On $|t|>1/2$ one has $|b_0N_1|>3/8$ and $|t|\le2|b_0N_1|$. Therefore
\[
 \E[e^{2|t|}t^2\ind_{\{|t|>1/2\}}]
 \le Cb_0^2\E[N_1^2e^{2|b_0N_1|}]
 \le C\eta^2a^2M^2.
\]
Average over $Z$ and use $1+\upsilon\le e^\upsilon$. Each step is valid for conditional distributions.
\end{proof}

\subsection{Known-density mirror updates}
At round $t$ choose $z_t\in\mathcal B$ and observe $Y_t=b_t+\xi_t$, where $b_t$ is predictable, $\norm{b_t}\le M$, and \eqref{eq:mixed-assumptions} holds conditionally. Fix a horizon $n$, a finite $R/(8n)$-net $\mathcal Z\subseteq\mathcal B$ containing $c$, and the convex family of Gaussian-mixture densities
\[
 \mathcal C_a=\left\{\sum_{z\in\mathcal Z}w_zN(z,a^2I):w\in\Delta_{|\mathcal Z|}\right\}.
\]
Let $m(P)=\E_PU$ and
\[
 \mathscr D(P\Vert Q)=\KL(P\Vert Q)+\frac\beta2\norm{m(P)-m(Q)}^2.
\]
This is a Bregman divergence of density entropy plus a quadratic of the mean. The KL is between densities, not between mixing weights. Initialize $P_1=N(c,a^2I)$, set $z_t=m(P_t)$, and after the one observation define
\[
 \Psi_t(u)=\chi(\eta\ip{Y_t}{u-z_t}),\qquad
 P_{t+1}\in\arg\min_{P\in\mathcal C_a}
       \{\E_P\Psi_t+\mathscr D(P\Vert P_t)\}.
\]
The update depends only on known densities and the observed vector.

\begin{lemma}[Conditional mixed-moment online rate]\label[lemma]{lem:online}
For $M>0$, $V\ge0$, $s_0>0$, and $0<\alpha<1/10$, choose the parameters in \cref{eq:tuning}. For any fixed comparator $u\in\mathcal B$, with probability at least $1-\alpha$,
\begin{equation}\label{eq:online}
 \frac1n\sum_{t=1}^n\ip{b_t}{z_t-u}
 \le C_pR\left[\frac{M+\sqrt V}{\sqrt n}
             +\frac{M\ell_\alpha}{n}
             +s_0(\ell_\alpha/n)^q\right],
 \qquad \ell_\alpha=\log(3/\alpha).
\end{equation}
The result is conditional on any history at the beginning of a run, and permits arbitrarily changing predictable means and conditional noise laws.
\end{lemma}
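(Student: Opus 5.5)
Fix the comparator $u\in\mathcal B$ and a net point $\bar u\in\mathcal Z$ with $\norm{u-\bar u}\le R/(8n)$. The net error costs at most $MR/(8n)$ per round, which is absorbed into $M\ell_\alpha/n$. The comparator density is $P^u=N(\bar u,a^2I)\in\mathcal C_a$, whose mean is $\bar u$. Because $P_1=N(c,a^2I)$ has the same covariance, $\mathscr D(P^u\Vert P_1)=\norm{\bar u-c}^2/(2a^2)+\beta\norm{\bar u-c}^2/2\le R^2/(2a^2)+\beta R^2/2$, with no dependence on $d$. The plan is the standard stochastic mirror-descent potential argument on the convex set $\mathcal C_a$ for the linear-in-$P$ losses $P\mapsto\E_P\Psi_t$. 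The quadratic mean term supplies strong convexity of $\mathscr D$ in $m(P)$, with modulus $\beta$. The entropy term supplies the exponential-weights structure that lets \cref{lem:mixed-exp} be used as a moment generating function bound.

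\textbf{Step 1: deterministic regret for the clipped losses.} Apply the three-point (Bregman) inequality for the proximal step. Since $\Psi_t$ is $\eta\norm{Y_t}$-Lipschitz but clipped, I will not use Lipschitz control. Instead I use the entropic form: the KL part of the prox step is an exponential-weights tilt. This gives
\[
\E_{P_t}\Psi_t-\E_{P^u}\Psi_t\le \mathscr D(P^u\Vert P_t)-\mathscr D(P^u\Vert P_{t+1})+\log\E_{P_t}e^{-(\Psi_t-\E_{P_t}\Psi_t)}.
\]
I may need the variant where the log-partition term is evaluated under a Gaussian-smoothed $P_t$. The fact that $P_t$ is a mixture of $N(z,a^2I)$ is exactly the structure $U=Z+aN$ of \cref{lem:mixed-exp}, with $x=z_t=m(P_t)$. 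Telescoping leaves $\mathscr D(P^u\Vert P_1)$ plus a sum of log-partition terms.

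\textbf{Step 2: from clipped losses to linear regret, in high probability.} Write $\Psi_t(U)=E(U,\xi_t)+\eta\ip{b_t}{U-z_t}$ with $E$ as in \cref{lem:mixed-exp}. Under $P^u$, the linear part has mean $\eta\ip{b_t}{\bar u-z_t}$. Under $P_t$, it has mean $0$ because $z_t=m(P_t)$. Hence the target $\eta\sum_t\ip{b_t}{z_t-\bar u}$ equals the clipped regret plus error terms $\E_{P^u}E(\cdot,\xi_t)-\E_{P_t}E(\cdot,\xi_t)$. Both the log-partition terms and these errors are controlled by \cref{lem:mixed-exp} in both signs: conditionally on the past, $\E e^{\pm E}\le e^{\upsilon}$, with $\upsilon=C_p\{\eta^p(s_0R)^p+\eta^2a^2(V+M^2)\}$. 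By Jensen over $U$, the same bound holds for the $P$-averaged errors. The resulting products $\prod_t e^{\pm(\cdot)-\upsilon}$ are supermartingales. Ville's/Markov's inequality, together with a union bound over the three error sequences (hence the $3$ in $\ell_\alpha$), then gives with probability $1-\alpha$
\[
\eta\sum_t\ip{b_t}{z_t-u}\le \frac{R^2}{2a^2}+\frac{\beta R^2}2+Cn\upsilon+C\ell_\alpha+\eta MR/8.
\]

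\textbf{Step 3: tuning.} The constraint $\eta MR\le1/16$ forces $\eta\le 1/(16MR)$. Dividing by $\eta n$ gives
\[
\frac{R^2}{a^2\eta n}+\frac{\beta R^2}{\eta n}+C\eta^{p-1}s_0^pR^p+C\eta a^2(V+M^2)+\frac{\ell_\alpha}{\eta n}.
\]
Take $a^2=R/(\eta\sqrt{n(V+M^2)})\wedge R^2$. The first and fourth terms then balance to $R\sqrt{(V+M^2)/n}$; the cap $a\le R$ handles the remaining case. The key point is that $a$ absorbs the variance: the confidence term $\ell_\alpha/(\eta n)$ now competes only with $\eta^{p-1}s_0^pR^p$ and the constraint. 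Choose $\eta=\min\{1/(16MR),\,(\ell_\alpha/n)^{1/p}/(s_0R)\}$. This yields $s_0R(\ell_\alpha/n)^q+MR\ell_\alpha/n$, with $\beta$ of order $\eta$-scaled constants (or $\beta=0$ if it is unnecessary). These choices form the tuning \cref{eq:tuning}.

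\textbf{Main obstacle.} The delicate step is Step 1. The update is a proximal step over the mixture family $\mathcal C_a$ with the hybrid divergence $\mathscr D$, not a plain exponential-weights posterior, so the log-partition bound must be derived from first-order optimality on $\mathcal C_a$ together with convexity of KL. I would first attempt the following route. Bound $\E_{P_t}\Psi_t-\E_{P_{t+1}}\Psi_t-\KL(P_{t+1}\Vert P_t)$ by $\sup_P\{\E_{P_t}\Psi_t-\E_P\Psi_t-\KL(P\Vert P_t)\}=\log\E_{P_t}e^{-(\Psi_t-\E_{P_t}\Psi_t)}$ via the Donsker--Varadhan formula, and drop the nonnegative quadratic mean term.
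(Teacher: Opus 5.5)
There is a genuine gap in how you treat the predictable drift $\eta\ip{b_t}{U-z_t}$. Your Step~1 inequality is valid, but it is too lossy. You drop the quadratic part of $\mathscr D(P_{t+1}\Vert P_t)$ and apply Donsker--Varadhan, which gives the stability term $\log\E_{P_t}e^{-(\Psi_t-\E_{P_t}\Psi_t)}$. Your Step~2 conversion contributes $-\E_{P_t}E_t$, and $\E_{P_t}\Psi_t=\E_{P_t}E_t$. So the per-round quantity you must control is exactly $\log\E_{P_t}e^{-\Psi_t}=\log\E_{P_t}e^{-E_t-\eta\ip{b_t}{U-z_t}}$.

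\Cref{lem:mixed-exp} does not bound this quantity. It controls $e^{\pm E}$ only after the linear part $\eta\ip b{U-x}$ has been subtracted. The drift leaves a Hedge-type variance term of order $\tfrac{\eta^2}{2}\operatorname{Var}_{P_t}\ip{b_t}{U}$. When the mixing weights of $P_t$ are spread across $\mathcal B$ in the direction $b_t$, this term is of order $\eta^2M^2R^2$, not $\eta^2a^2M^2$. It is present in your bound even when $\xi_t\equiv0$.

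After dividing by $\eta n$, the term contributes $\eta M^2R^2$, which does not vanish for $\eta=1/(16MR)$. Shrinking $\eta$ to remove it gives at best $\ell_\alpha/(\eta n)+\eta M^2R^2\ge2MR\sqrt{\ell_\alpha/n}$. That puts a confidence factor on $M$, and it is not dominated by $M/\sqrt n+M\ell_\alpha/n$ uniformly in $\alpha$; take $V=0$, $s_0\to0$, $n=\ell_\alpha^3$. Your Step~2 display therefore does not follow: it keeps $\beta R^2/2$ but contains neither this variance term nor a compensating $\beta$-dependent term.

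The missing idea is the reason $\beta$ appears in $\mathscr D$. Keep the quadratic, and split the variational term before applying the entropy inequality. Since $m(P_t)=z_t$,
\[
 -\E_P\Psi_t-\mathscr D(P\Vert P_t)
 =\bigl[-\E_PE_t-\KL(P\Vert P_t)\bigr]
 +\Bigl[-\eta\ip{b_t}{m(P)-z_t}-\tfrac\beta2\norm{m(P)-z_t}^2\Bigr]
 \le\log\E_{P_t}e^{-E_t}+\frac{\eta^2M^2}{2\beta}.
\]
\Cref{lem:mixed-exp} now applies directly to $\E_{P_t}e^{-E_t}$. The drift costs $\beta R^2/2+n\eta^2M^2/(2\beta)$ in total. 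With $\beta=\eta M\sqrt n/R$, this is $MR/\sqrt n$ after dividing by $\eta n$, for every $\eta$. In effect the mean takes Euclidean steps of size $\eta/\beta=R/(M\sqrt n)$, decoupled from the $\eta$ dictated by the confidence and heavy-tail terms.

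With this change the rest of your plan goes through and is essentially the paper's argument: the Gaussian comparator costs $R^2/(2a^2)$, the supermartingales need no union bound over the net, and net rounding is absorbed. Your alternative tuning $a^2=\min\{R/(\eta\sqrt{n(V+M^2)}),R^2\}$ also works, in place of the paper's $a=R/\sqrt{\ell_\alpha}$ with an extra constraint on $\eta$.
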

\begin{proof}
For a fixed grid comparator $u_*$ and $Q=N(u_*,a^2I)$, the Bregman three-point identity and update optimality give
\[
 -\sum_t\E_Q\Psi_t\le\mathscr D(Q\Vert P_1)+\sum_t\mathcal V_t,
 \qquad \mathcal V_t=\sup_{P\in\mathcal C_a}
                  \{-\E_P\Psi_t-\mathscr D(P\Vert P_t)\}.
\]
This follows by summing
$\mathscr D(Q\Vert P_{t+1})\le\mathscr D(Q\Vert P_t)+\E_Q\Psi_t+\mathcal V_t$ and dropping the nonnegative final divergence.
For analysis set $E_t(u)=\Psi_t(u)-\eta\ip{b_t}{u-z_t}$ and
$W_t=\E_{P_t}e^{-E_t(U)}$.
Completing a square in the mean and applying the entropy variational inequality yields
\[
 \mathcal V_t\le\eta^2M^2/(2\beta)+\log W_t.
\]
Indeed $-\eta\ip{b_t}{m(P)-z_t}-(\beta/2)\norm{m(P)-z_t}^2\le\eta^2M^2/(2\beta)$, and Jensen under~$P$~gives
$-\E_PE_t-\KL(P\Vert P_t)\le\log\E_{P_t}e^{-E_t}$.

By Lemma~\ref{lem:mixed-exp}, $\E_{t-1}W_t\le e^\upsilon$ and, by Jensen,
$\E_{t-1}e^{\E_QE_t}\le e^\upsilon$. The two nonnegative product supermartingales give, each with failure at most $\alpha/2$,
\[
 \sum_t\log W_t\le n\upsilon+\log(2/\alpha),\qquad
 \sum_t\E_QE_t\le n\upsilon+\log(2/\alpha).
\]
No union bound over the grid is used. Since
$\eta\sum_t\ip{b_t}{z_t-u_*}=-\sum_t\E_Q\Psi_t+\sum_t\E_QE_t$, these~imply
\begin{equation}\label{eq:master}
 \eta\sum_t\ip{b_t}{z_t-u_*}\le
 \frac{R^2}{2a^2}+\frac{\beta R^2}{2}
 +\frac{n\eta^2M^2}{2\beta}+2n\upsilon+2\log(2/\alpha).
\end{equation}
The Gaussian comparison cost $R^2/(2a^2)$ has no ambient-dimension factor.
Take
\begin{equation}\label{eq:tuning}
 \begin{split}
 a&=R/\sqrt{\ell_\alpha},\qquad \beta=\eta M\sqrt n/R,\\
 \eta&=\min\left\{\frac1{16MR},
 \frac{\ell_\alpha}{R\sqrt{n(V+M^2)}},
 \frac1{s_0R}(\ell_\alpha/n)^{1/p}\right\}.
 \end{split}
\end{equation}
Divide \eqref{eq:master} by $\eta n$. The two $\beta$ terms sum to $MR/\sqrt n$. The other terms are bounded by
\[
 C_p\left[\frac{\ell_\alpha}{\eta n}
   +\eta^{p-1}(s_0R)^p
   +\frac{\eta R^2(V+M^2)}{\ell_\alpha}\right].
\]
To check the balance explicitly, the reciprocal of the minimum defining $\eta$ gives
\[
 \frac{\ell_\alpha}{\eta n}
 \le R\left[\frac{16M\ell_\alpha}{n}
       +\sqrt{\frac{V+M^2}{n}}
       +s_0(\ell_\alpha/n)^q\right].
\]
The third choice bounds $\eta^{p-1}(s_0R)^p$ by $s_0R(\ell_\alpha/n)^q$, and the second bounds $\eta R^2(V+M^2)/\ell_\alpha$ by $R\sqrt{(V+M^2)/n}$. These are the terms in \eqref{eq:online}. Approximating $u$ by $u_*$ within $R/(8n)$ adds at most $MR/(8n)$. This is absorbed. The comparator is fixed after conditioning on the initial history, which proves the conditional assertion.
\end{proof}

\paragraph{Computational interpretation.}
The mixture objective is continuous on a compact simplex and strictly convex in the density; all component densities are positive, so it has a measurable minimizer. Its entropy integrals and their derivatives have finite uniform Gaussian-tail bounds even on the boundary of the simplex: a mixture is at least the minimum of its finitely many component densities, and ratios of components are exponentials of linear functions. Deterministic numerical integration and finite optimization to first-order residual $\eta MR/(8n)$ are possible with unrestricted internal computation. This residual adds only $MR/(8n)$ to the average regret. The net can be exponentially large in dimension, but these deterministic computations require no additional oracle calls.

\section{Feasible Queries and Conditional Rejection}\label{sec:rejection}
The online result requires bounded conditional means and a finite trace second moment. We now construct observations with these properties from the original composite oracle. The construction does not use strong convexity.
\subsection{Feasible queries and bounded predictable corrections}
We give the reduction explicitly so that no proximal map of a newly restricted domain is assumed. Work in the affine hull of $\mathcal X$, with translation space $V_0$ and projection $\Pi_{V_0}$. If it is zero-dimensional there is nothing to optimize. Fix an anchor $x^\circ\in\operatorname{ri}\mathcal X$. At any relative-interior point $w$, the tangent component of every subgradient of $f$ has norm at most $G$: apply the subgradient inequality in both directions of a sufficiently short feasible segment through $w$.

Suppose a feasible center $c$ has $\norm{c-x^*}\le R$ and $z\in B_{V_0}(c,R)$ is a virtual point. Minimality~gives
\begin{equation}\label{eq:h-lower}
 h(u)\ge h(x^*)-G\norm{u-x^*}.
\end{equation}
For $\lambda>0$ put
$y_\lambda=\prox_{\lambda h}(z)$ and $a_\lambda=(z-y_\lambda)/\lambda\in\partial h(y_\lambda)$.
Start at $\lambda_0=2R/G$. Comparison to $x^*$ in the proximal objective and \eqref{eq:h-lower} imply, with $t=\norm{y_\lambda-z}$ and $b_0=\norm{z-x^*}\le2R$,
$t^2\le b_0^2+2\lambda G(t+b_0)$, hence $t\le2\lambda G+b_0$. Thus $\norm{a_{\lambda_0}}\le3G$.
If $\norm{a_\lambda}\ge G$, stop; otherwise halve $\lambda$ until either this holds or $\lambda\le R/(Gn)$. Proximal displacement is nondecreasing in the parameter (compare the two proximal objectives and add), so $\norm{a_{\lambda/2}}\le2\norm{a_\lambda}$. At termination,
\begin{equation}\label{eq:correction}
 \norm a\le3G,\qquad a=(z-y)/\lambda\in\partial h(y),
 \qquad \norm a\ge G\ \text{or}\ \lambda\le R/(Gn).
\end{equation}
Only $1+\lceil\log_2(2n)\rceil$ free proximal evaluations are needed.
Choose $w\in\operatorname{ri}\mathcal X$ on the segment from $y$ to $x^\circ$ with $\norm{w-y}\le R/(8n)$. A raw observation is transformed into
\begin{equation}\label{eq:corrected-observation}
 Y=\Pi_{V_0}g(w,\xi)+a,\qquad
 b=\E Y=\bar g+a,\quad \norm b\le4G.
\end{equation}
Its centered noise obeys \eqref{eq:native}. The subgradient inequalities at $w$ and $y$ give
\[
 F(y)-F^*\le\ip{\bar g+a}{y-x^*}+2G\norm{w-y}.
\]
Since $z-y=\lambda a$,
$-\ip{\bar g+a}{z-y}\le\lambda(G\norm a-\norm a^2)$, which is nonpositive or at most $GR/(4n)$ by \eqref{eq:correction}. Consequently
\begin{equation}\label{eq:transfer}
 F(y)-F^*\le\ip b{z-x^*}+GR/(2n).
\end{equation}
Everything selecting $w$ is determined before any raw observation at that virtual round. The set restricting virtual centers is not a new constraint on actual proximal evaluations.

\subsection{Moments of accepted observations}
\begin{lemma}\label[lemma]{lem:accept}
For $Y=b+\zeta$, $\norm b\le4G$, satisfying \eqref{eq:native}, fix $\tau\ge64(4G+L)$. Let $Q$ be its conditional law given $\norm Y\le\tau$, and $b'=\E_QY$. Then
\begin{align}
 \Pp\{\norm Y>\tau\}&\le(2L/\tau)^p\le1/16,\label{eq:accept-prob}\\
 \norm{b'-b}&\le C_ps(L/\tau)^{p-1}=:B_\tau,\label{eq:accept-bias}\\
 \E_Q\norm{Y-b'}^2&\le C_pL^p\tau^{2-p}=:V_\tau,\label{eq:accept-V}\\
 \sup_{\norm v=1}\E_Q|\ip v{Y-b'}|^p&\le C_ps^p.\label{eq:accept-weak}
\end{align}
All assertions also hold conditionally at a frozen adaptive query point.
\end{lemma}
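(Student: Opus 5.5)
Throughout, write $E=\{\norm Y\le\tau\}$ for the acceptance event and $\pi=\Pp(E)$. I would prove the four bounds in the order stated, everything conditional on the frozen query. The estimates are pointwise in the conditional law, so conditioning on an adaptive past changes nothing.

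\emph{Rejection probability.} Since $\norm b\le4G\le\tau/16$, the event $\norm Y>\tau$ forces $\norm\zeta>\tau-4G\ge\tau/2$. Markov's inequality with the $p$th norm moment then gives $(2L/\tau)^p$. The assumption $\tau\ge64L$ makes this at most $(1/32)^p\le1/16$, so $\pi\ge15/16$.

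\emph{Bias.} This is the step I expect to be the main obstacle, because rejection is defined through the norm while the target scale is the directional $s$. First use centering: $\E\zeta=0$, so
\[
b'-b=\frac{\E[\zeta\ind_E]}{\pi}=-\frac{\E[\zeta\ind_{E^c}]}{\pi}.
\]
Then fix a unit vector $v$ and apply H\"older with exponents $p$ and $p/(p-1)$:
\[
|\E\ip v\zeta\ind_{E^c}|\le s\,\Pp(E^c)^{1-1/p}\le s(2L/\tau)^{p-1}.
\]
Taking the supremum over $v$ and dividing by $\pi\ge15/16$ yields $B_\tau$.

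\emph{Second moment.} Since $b'$ is the $Q$-mean, $\E_Q\norm{Y-b'}^2\le\E_Q\norm{Y-b}^2=\E[\norm\zeta^2\ind_E]/\pi$. On $E$ we have $\norm\zeta\le\tau+4G\le2\tau$, hence $\norm\zeta^2\le(2\tau)^{2-p}\norm\zeta^p$. Taking expectations gives $C_pL^p\tau^{2-p}$.

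\emph{Directional moment.} Write
\[
\ip v{Y-b'}=\ip v\zeta+\ip v{b-b'}.
\]
For the first term, $\E_Q|\ip v\zeta|^p\le s^p/\pi$. The second term is at most $B_\tau\le C_ps(1/64)^{p-1}\le C_ps$. Combining the two by convexity, $|x+y|^p\le2^{p-1}(|x|^p+|y|^p)$, gives $C_ps^p$.
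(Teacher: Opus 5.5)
Your proposal is correct and follows essentially the same route as the paper: Markov on $\norm{\zeta}>\tau/2$ for the rejection probability, centering plus H\"older against the rejection event for the bias, and the bound $\norm{\zeta}^2\le(C\tau)^{2-p}\norm{\zeta}^p$ on acceptance for the second moment. The directional moment is also a two-term convexity split in both, with one cosmetic difference: you control the mean shift $\ip{v}{b-b'}$ by the bias bound $B_\tau\le C_ps$, whereas the paper uses Jensen, $|\E_Q\ip{v}{\zeta}|^p\le\E_Q|\ip{v}{\zeta}|^p$.
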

\begin{proof}
Rejection implies $\norm\zeta>\tau/2$, proving \eqref{eq:accept-prob}. Writing the rejection probability as $a_0$, centering and H\"older's inequality give
\[
 |\ip v{b'-b}|=\frac{|\E[\ip v\zeta\,\ind_{\{\norm Y>\tau\}}]|}{1-a_0}
 \le\frac{s a_0^q}{1-a_0}.
\]
This proves \eqref{eq:accept-bias} by duality. There is no mean-times-rejection-probability bias term. On acceptance $\norm\zeta\le3\tau/2$, so
$\E_Q\norm\zeta^2\le (3\tau/2)^{2-p}L^p/(1-a_0)$; subtracting the conditional mean decreases the second moment. Finally, centering and Jensen~imply
$\E_Q|\ip v{Y-b'}|^p\le2^p\E_Q|\ip v\zeta|^p\le2^ps^p/(1-a_0)$.
\end{proof}

\begin{proposition}[Hybrid conditional convex primitive]\label[proposition]{prop:primitive}
Fix $\tau\ge64(4G+L)$. For $n\ge C_p\ell_\alpha$, $\ell_\alpha=\log(3/\alpha)$, and a conditionally valid radius $\norm{c-x^*}\le R$, at most $4n$ raw calls suffice for
\begin{equation}\label{eq:primitive}
 F(\widehat x)-F^*\le C_pR\left[
 \frac{G+\sqrt{L^p\tau^{2-p}}}{\sqrt n}
 +\frac{G\ell_\alpha}{n}
 +s(\ell_\alpha/n)^q+s(L/\tau)^{p-1}\right]
\end{equation}
with conditional probability at least $1-\alpha$.
\end{proposition}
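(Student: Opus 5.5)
The plan is to run the online learner of \cref{lem:online} on the ball $\mathcal B=B_{V_0}(c,R)$ and feed it accepted, corrected observations. At each virtual round $t$, the learner outputs $z_t=m(P_t)\in\mathcal B$. We then apply the proximal search of the previous subsection to obtain $y_t$, $w_t$ and $a_t$, all predictable, and draw raw oracle calls at $w_t$. Each draw is transformed into $Y=\Pi_{V_0}g(w_t,\xi)+a_t$ as in \cref{eq:corrected-observation}, and we accept the first draw with $\norm Y\le\tau$. The accepted vector is passed to the mirror update. The output is $\widehat x=\frac1n\sum_t y_t$. If the budget of $4n$ raw calls is exhausted before $n$ acceptances, we output the proximal point $y$ of the center, or any fixed feasible point; this event will be shown to be unlikely.

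Analysis on the ideal process. First consider the uncapped process, which repeats draws until acceptance. By \cref{lem:accept}, conditionally on the past the accepted $Y_t$ has mean $b'_t$ with $\norm{b'_t}\le 4G+B_\tau\le 5G+L=:M$; here we use $\tau\ge64L$ to bound $B_\tau\le C_ps$. Its centered noise has trace second moment at most $V_\tau$ and directional $p$th moment at most $C_ps^p$. \Cref{lem:online} with comparator $u=x^*\in\mathcal B$ and failure probability $\alpha/2$ therefore gives
\[
\frac1n\sum_t\ip{b'_t}{z_t-x^*}\le C_pR\Bigl[\tfrac{G+L+\sqrt{V_\tau}}{\sqrt n}+\tfrac{G\ell_\alpha}{n}+s(\ell_\alpha/n)^q\Bigr].
\]
Next, convexity of $F$ and \cref{eq:transfer} give $F(\widehat x)-F^*\le\frac1n\sum_t\ip{b_t}{z_t-x^*}+GR/(2n)$. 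We replace $b_t$ by $b'_t$ at a cost of at most $B_\tau\norm{z_t-x^*}\le2RB_\tau$, which produces the bias term $s(L/\tau)^{p-1}$. It remains to absorb $L/\sqrt n$. Since $\tau\ge L$ and $p\le2$, we have $L^2\le L^p\tau^{2-p}$, so $L/\sqrt n$ is dominated by $\sqrt{V_\tau/n}$.

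Budget and coupling. Let $N_t$ be the number of raw draws used in round $t$. Conditionally on the past, $N_t-1$ is stochastically dominated by a geometric variable with failure probability at most $1/16$, so $\E_{t-1}e^{\lambda N_t}$ is bounded for a fixed $\lambda$ such as $\lambda=1$. A supermartingale (Chernoff) argument then gives $\Pp\{\sum_tN_t>4n\}\le e^{-cn}\le\alpha/2$ once $n\ge C_p\ell_\alpha$. We couple the capped procedure with the ideal one by using the same raw draws. The two procedures agree on the event $\sum_t N_t\le4n$. A union bound over this event and the online failure event then yields conditional probability at least $1-\alpha$. Concentration is never conditioned on completion, which addresses the caveat raised in the overview.

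The main obstacle is verifying that the hypotheses of \cref{lem:online} hold conditionally for the accepted process. The step at risk is that the accepted mean $b'_t$ and noise law depend on the kernel at $w_t$, which is itself predictable. One must check that the filtration of accepted rounds, which includes rejected draws in the history, keeps $z_t$ and $b'_t$ predictable. One must also check that the moment bounds of \cref{lem:accept} apply given that history. I would handle this by defining the filtration at acceptance times and invoking the frozen-query clause of \cref{lem:accept}.
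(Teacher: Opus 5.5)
Your architecture matches the paper's. You run the ideal uncapped accepted process, apply \cref{lem:online} at level $\alpha/2$ with comparator $x^*$, replace $b'_t$ by $b_t$ at cost $2RB_\tau$, average \eqref{eq:transfer}, and couple to the capped run. Your Chernoff bound on the geometrically dominated draw counts is a valid substitute for the paper's union bound over $3n$ rejection times.

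The gap is in the display you derive from \cref{lem:online}. With your choice $M=5G+L$, the lemma's term $RM\ell_\alpha/n$ contains $RL\ell_\alpha/n$. Your display drops this term; you only absorb $L/\sqrt n$. It is not dominated by the right side of \eqref{eq:primitive}. Take $\tau=64(4G+L)$ with $G\ll L$ and $s\ll L$, and $n=C_p\ell_\alpha$ with $\ell_\alpha$ large. Then the right side is at most of order $R(L/\sqrt n+G+s)$, while $RL\ell_\alpha/n=RL/C_p$. A norm-scale-times-confidence term of this type is exactly what the proposition is built to avoid. It would propagate to an $L\ell/T$ term, which is absent from $v_T$ and would spoil the sharp rate in some finite-budget regimes.

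The repair is not to coarsen the mean bound to the scale $L$. Take $M=4G+B_\tau$, as the paper does. The extra pieces $B_\tau/\sqrt n$ and $B_\tau\ell_\alpha/n$ are then at most a constant times $B_\tau$, because $n\ge C_p\ell_\alpha$, and they merge with the bias term $s(L/\tau)^{p-1}$. Alternatively, your own estimate $B_\tau\le C_ps$ permits $M=4G+C_ps$. Then $s\ell_\alpha/n\le s(\ell_\alpha/n)^q$, since $\ell_\alpha\le n$ and $q\le1$, and $s/\sqrt n\le L/\sqrt n\le\sqrt{L^p\tau^{2-p}/n}$. With either choice the rest of your argument goes through unchanged.
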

\begin{proof}
At each virtual round freeze the query and correction \eqref{eq:corrected-observation}. Draw fresh copies until $\norm Y\le\tau$, then use that observation in the density mirror algorithm analyzed in Lemma~\ref{lem:online}. Ignore rejected values for update selection. In the ideal run with no cap, the accepted conditional law is $Q_t$; its mean has norm at most $4G+B_\tau$, its centered second moment is at most $V_\tau$, and its weak $p$th moment is at most $C_ps^p$.
Apply Lemma~\ref{lem:online} with failure level $\alpha/2$. Replacing accepted means $b'_t$ by original means $b_t$ in the regret costs at most $2RnB_\tau$. The extra occurrences of $B_\tau/\sqrt n$ and $B_\tau\ell_\alpha/n$ are bounded by a constant times $B_\tau$ because $n\ge C_p\ell_\alpha$. Averaging \eqref{eq:transfer} proves \eqref{eq:primitive} for the ideal run.

Cap the run at $4n$ raw trials. Failure to obtain $n$ acceptances requires at least $3n$ rejections among those trials. For any prescribed $3n$ trial times, iterated conditioning bounds the probability of rejection at all of them by $16^{-3n}$. A union bound therefore gives
\begin{equation}\label{eq:raw-count}
 \Pp\{\text{budget exhausted before $n$ acceptances}\}
 \le2^{4n}16^{-3n}=2^{-8n}\le\alpha/2.
\end{equation}
One may extend the ideal run with dummy trials to define the event after its $n$th acceptance. Couple the capped algorithm to the ideal run, rather than conditioning the online concentration proof on completion. This combines the two failure probabilities correctly. The actual output is the average of the proximal points associated with accepted rounds; all belong to the effective domain of $h$. On cap exhaustion, return a predetermined feasible proximal point; the failure event already includes this case. There are no uncharged replacement samples.
\end{proof}

\section{Upper Bounds and Curvature Localization}\label{sec:upper-full}
The common primitive gives a linear-in-radius error bound. We first use it directly for $\mu=0$, then exploit strong convexity through certified radii. Both arguments use the same threshold, and we optimize that threshold only after establishing their common fixed-threshold form.

\subsection{The general convex branch}
For a fixed $\tau\ge64(4G+L)$, set
\begin{equation}\label{eq:v-tau}
 v_T(\tau)=\frac{G+\sqrt{L^p\tau^{2-p}}}{\sqrt T}
 +\frac{G\ell}{T}+s(\ell/T)^q+s(L/\tau)^{p-1}.
\end{equation}
A free proximal evaluation provides the cap $GD$. Indeed, for $y=\prox_{\eta h}(x_1)$, proximal optimality and Lipschitzness give
\[
 F(y)-F^*\le\frac{D^2}{2\eta}+\frac{\eta G^2}{2}.
\]
Choosing $\eta=D/G$ proves the cap without any stochastic observation. If $T<C_p\ell$, the cap is sufficient because $G\ell/T$ is a constant fraction of $G$. Otherwise, use \cref{prop:primitive} with $n=\lfloor T/4\rfloor$, radius $D$, and failure level $\delta$. Taking the better parameter-only guarantee yields
\begin{equation}\label{eq:convex-fixed}
 \cE_T^{(0)}(\delta)\le C_pD\min\{G,v_T(\tau)\}
 =C_p\cH_{0,D}(\min\{G,v_T(\tau)\}).
\end{equation}
This argument also gives the bound $C_pDv_T(\tau)$ when $\mu>0$.

\subsection{Strong convexity and zero-query preprocessing}
In this subsection and the following localization argument, assume $\mu>0$.
For $y=\prox_{\eta h}(x_1)$, strong convexity of the proximal objective gives
\[
 h(y)-h(x^*)\le\frac{D^2-\norm{y-x_1}^2}{2\eta}
                   -\frac{\mu+\eta^{-1}}2\norm{y-x^*}^2.
\]
Adding Lipschitzness and maximizing the distance quadratic yields
\begin{equation}\label{eq:zero}
 F(y)-F^*\le D^2/(2\eta)+G^2/[2(\mu+\eta^{-1})].
\end{equation}
Minimizing in $\eta^{-1}\ge0$ gives the infimum zero-query envelope $\cH(G)$. Finite parameters attain it within factor two: take $\eta=D/G$ for $D\le G/\mu$, and $\eta=\mu D^2/G^2$ otherwise. The matching lower bound is given by $G|x_1\mp\rho|$, $\rho=\min\{D,G/\mu\}$, with the common quadratic regularizer: one of the two instances has wrong-half-line probability at least $1/2$ without observations.

A finite warm start gives a center $c_0$ with radius
\begin{equation}\label{eq:warm}
 R_0=\begin{cases}D,&D\le G/\mu,\\ \sqrt2G/\mu,&D>G/\mu,\end{cases}
 \qquad \norm{c_0-x^*}\le R_0\le\sqrt2G/\mu.
\end{equation}
Use $c_0=x_1$ in the first case and the second finite proximal parameter above in the other. Then \eqref{eq:zero} gives $F(c_0)-F^*\le G^2/\mu$, and strong convexity implies
\begin{equation}\label{eq:growth}
 F(x)-F^*\ge(\mu/2)\norm{x-x^*}^2.
\end{equation}
This direction follows directly by applying strong convexity on the segment from $x^*$ to $x$ and sending its parameter to zero. Only this direction of the error-to-distance implication is needed.

\subsection{Fixed threshold, halved radii, and backwards confidence}
Keep $\tau$ fixed and use $v_T(\tau)$ from \cref{eq:v-tau}.
Prescribe a number of stages $J\ge1$, radii $R_j=2^{-j}R_0$, and
\[
 \delta_j=\delta\,2^{j-J-1},\qquad
 \ell_j=\log(3/\delta_j)=\ell+(J-j+1)\log2.
\]
Their sum is less than $\delta$. Stage $j$ uses the primitive with the previous output as center, and the deterministic number of accepted rounds
\begin{equation}\label{eq:phase-budget}
 n_j=\left\lceil C_p\left[
 \frac{G^2+L^p\tau^{2-p}}{\mu^2R_{j-1}^2}
 +\frac{G\ell_j}{\mu R_{j-1}}
 +\ell_j\left(\frac{s}{\mu R_{j-1}}\right)^{r_p}
 \right]\right\rceil.
\end{equation}
Allow at most $4n_j$ raw calls. Since $G/(\mu R_{j-1})\ge1/\sqrt2$, the middle term ensures $n_j\ge C\ell_j$. If
\begin{equation}\label{eq:bias-target}
 B_\tau\le c_p\mu R_{J-1},
\end{equation}
then a sufficiently large budget constant makes the conditional primitive error at most $\mu R_{j-1}^2/8=\mu R_j^2/2$. Equation~\eqref{eq:growth} certifies the next radius. The probability of the first failed stage is at most $\delta_j$ conditional on all previous successes, so total failure is less than $\delta$.

\begin{lemma}\label[lemma]{lem:phase-sum}
The raw sample budget obeys
\begin{equation}\label{eq:phase-sum}
 4\sum_{j=1}^J n_j\le C_p\left[
 \frac{G^2+L^p\tau^{2-p}}{\mu^2R_J^2}
 +\frac{G\ell}{\mu R_J}
 +\ell\left(\frac{s}{\mu R_J}\right)^{r_p}\right].
\end{equation}
Its constant is independent of $J$.
\end{lemma}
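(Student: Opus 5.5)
The plan is to bound the sum term by term, using that $R_{j-1}=2^{J-j+1}R_J$, so each stage's cost is a geometric fraction of the final-stage cost. First, the ceiling in \eqref{eq:phase-budget} contributes at most $1$ per stage. That gives a total of $J$. We absorb $J$ by noting that $J\le C(J-j+1)$ summed appropriately is not needed: instead use $1\le\sqrt2\,G/(\mu R_{j-1})$ together with $\ell_j\ge1$. Then the ceiling's $+1$ is dominated by the middle term $G\ell_j/(\mu R_{j-1})$ up to a constant, and the $+1$ contributions disappear into that term's sum.

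Write $k=J-j+1\in\{1,\dots,J\}$, so $R_{j-1}=2^kR_J$ and $\ell_j=\ell+k\log2$. The three terms are then handled separately.

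\emph{First term.} It contributes
\[
\frac{G^2+L^p\tau^{2-p}}{\mu^2R_J^2}\sum_{k\ge1}4^{-k}\le\frac13\cdot\frac{G^2+L^p\tau^{2-p}}{\mu^2R_J^2}.
\]

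\emph{Second term.} It contributes
\[
\frac{G}{\mu R_J}\sum_{k\ge1}2^{-k}(\ell+k\log2)\le\frac{G}{\mu R_J}(\ell+2\log2)\le C\,\frac{G\ell}{\mu R_J},
\]
using $\ell\ge\log 30>1$.

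\emph{Third term.} It contributes
\[
\Bigl(\frac{s}{\mu R_J}\Bigr)^{r_p}\sum_{k\ge1}2^{-kr_p}(\ell+k\log2).
\]
Since $r_p=1/q=p/(p-1)\ge2$, the factor $2^{-kr_p}\le4^{-k}$, and the sum is at most $C(\ell+1)\le C\ell$.

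Multiplying by $4$ and by the budget constant $C_p$ gives \eqref{eq:phase-sum}, with a constant that depends only on $p$ (and here in fact only on the absolute geometric sums), uniformly in $J$.

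The only point needing care is the one the backwards confidence allocation was designed for. The extra $k\log2$ in $\ell_j$ is largest exactly where $2^{-k}$ (or $2^{-kr_p}$) is smallest, so $\sum_k k2^{-k}$ converges and no $\log J$ appears. This is the step I would verify most carefully, together with the absorption of the ceiling's $+1$ via $G/(\mu R_{j-1})\ge1/\sqrt2$, which relies on $R_{j-1}\le R_0\le\sqrt2G/\mu$ from \eqref{eq:warm}.
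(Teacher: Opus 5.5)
Your proposal is correct and follows the paper's proof. You reindex by distance to the final stage, so that $R_{j-1}=2^kR_J$ and $\ell_j=\ell+k\log2$, and sum the three terms as geometric series with exponents $2$, $1$, $r_p$; convergence of $\sum_k k2^{-k\gamma}$ then absorbs the backwards confidence allocation. The remaining differences are cosmetic and both valid: you absorb the ceilings into the middle term via $G\ell_j/(\mu R_{j-1})\ge1/\sqrt2$ rather than into the first via $G^2/(\mu^2R_{j-1}^2)\ge1/2$, and you use $r_p\ge2$ to get an absolute constant in place of the paper's $C_{r_p}$.
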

\begin{proof}
Ceilings are absorbed by the first summand because $G^2/(\mu^2R_{j-1}^2)\ge1/2$. Write $m=J-j$. Then $R_{j-1}=2^{m+1}R_J$ and $\ell_j=\ell+(m+1)\log2$. For every $\gamma>0$,
\[
 \sum_{m=0}^{\infty}2^{-\gamma(m+1)}[\ell+(m+1)\log2]
 =\frac{2^{-\gamma}}{1-2^{-\gamma}}\ell
  +\frac{2^{-\gamma}}{(1-2^{-\gamma})^2}\log2
 \le C_\gamma\ell.
\]
Use exponents $1,r_p$ for the last two terms and the ordinary geometric series with exponent $2$ for the first. The factor four accounts for all raw observations, including rejected trials.
\end{proof}

\begin{proposition}\label[proposition]{prop:fixed-tau}
For $\mu>0$, every $T\ge1$, and every $\tau\ge64(4G+L)$,
\begin{equation}\label{eq:fixed-tau}
 \cE_T^{\rm sc}(\delta)\le
 C_p\cH(\min\{G,v_T(\tau)\}).
\end{equation}
\end{proposition}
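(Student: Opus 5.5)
The plan is to combine three guarantees that use only public parameters: the zero-query bound of order $\cH(G)$, the unlocalized convex bound $C_pDv_T(\tau)$ from \cref{eq:convex-fixed}, and the localized procedure of \cref{lem:phase-sum}. We then check that the better of them is at most $C_p\cH(\min\{G,v_T(\tau)\})$. Write $v=v_T(\tau)$. By \cref{eq:H}, $\cH(\min\{G,v\})$ is comparable to $\min\{DG,G^2/\mu,Dv,v^2/\mu\}$. Two cases are handled without localization. If $v\ge\mu D$ or $v\ge G$, the target is of order $\min\{DG,Dv\}$ or $\cH(G)$, and these are attained by \cref{eq:convex-fixed} or by \cref{eq:zero} respectively. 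If $T<C_p\ell$, then $G\ell/T\gtrsim G$, so $v\gtrsim G$ and the zero-query cap again suffices. So the remaining case is $v\le c\min\{G,\mu D\}$ with $T\ge C_p\ell$, and the aim there is error $C_pv^2/\mu$.

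In that case I will run localization from the warm start \cref{eq:warm}, with the final radius $R_J$ chosen as the largest dyadic value $R_0 2^{-J}$ that is at most of order $v/\mu$. Write $\rho=C v/\mu$ with $C$ a large constant. Since $v\lesssim\min\{G,\mu D\}\lesssim\mu R_0$, some $J\ge1$ gives $\rho/2<R_J\le\rho$; if instead $\rho\ge R_0$, the warm start already gives error $\lesssim\mu R_0^2\lesssim v^2/\mu$ in the relevant branch. The two things to verify are the budget and the bias condition \cref{eq:bias-target}.

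\emph{Budget.} Substitute $R_J\asymp v/\mu$ into \cref{eq:phase-sum}. The first term is $(G^2+L^p\tau^{2-p})/v^2$, which is at most $C^{-2}T$ because $v\ge(G+\sqrt{L^p\tau^{2-p}})/\sqrt T$. The second term is $G\ell/v$, which is $\le T$ since $v\ge G\ell/T$. The third term is $\ell(s/v)^{r_p}$, which is $\le T$ since $v\ge s(\ell/T)^q$ and $r_p=1/q$. Each term is divided by a power of the constant $C$, so taking $C$ large makes the total at most $T$.

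\emph{Bias.} We need $B_\tau=C_ps(L/\tau)^{p-1}\le c_p\mu R_{J-1}$. Since $R_{J-1}\gtrsim v/\mu$ and $v\ge s(L/\tau)^{p-1}$, this holds once $C$ is large.

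Given both, every stage succeeds with total probability at least $1-\delta$. The final output $x$ then satisfies $F(x)-F^*\le\mu R_J^2/2\lesssim v^2/\mu$, which is the quadratic branch of the target. Unused oracle calls are simply discarded.

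The main obstacle is the bookkeeping of constants. The budget requires $R_J$ to be a large multiple of $v/\mu$, while the final error $\mu R_J^2$ only needs to be comparable to $v^2/\mu$. These are compatible because each constant enters polynomially and the dyadic rounding costs only a factor of $2$. A related point is that the selection among the three procedures must depend only on $G,D,\mu$ and $v$, all of which are public. This holds, so no comparison of unknown objective values is needed.
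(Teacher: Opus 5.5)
Your argument is the paper's proof. It uses the same three public-parameter procedures: the zero-query cap, the one-shot primitive with radius $D$, and localization from the warm start \eqref{eq:warm} down to $R_J\asymp v/\mu$. The budget check through \cref{lem:phase-sum} is the same, with each term carrying a negative power of the constant. You also use the last term of $v_T(\tau)$ to secure \eqref{eq:bias-target}, as the paper does.

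One step is wrong as written. When $\rho\ge R_0$, you say the warm start already has error $\lesssim\mu R_0^2$. The warm start only certifies the distance $\norm{c_0-x^*}\le R_0$, and distance does not bound objective error here.

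\begin{itemize}
\item In the branch $D\le G/\mu$ the warm start is $c_0=x_1$, whose objective error is not controlled at all. The zero-query proximal point only guarantees $\cH(G)\asymp DG$. Since $v<\mu D\le G$ in your remaining case, $DG$ can far exceed $v^2/\mu$ when $G\gg\mu D$.
\item In the branch $D>G/\mu$ your claim does hold, since $F(c_0)-F^*\le G^2/\mu=\mu R_0^2/2$.
\end{itemize}

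The subcase $\mu D/C\le v<\mu D$ is genuinely possible under your case split, because you took the threshold $v\ge\mu D$ with constant one.

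The paper handles $R_0<2r_*$ by running the one-stage primitive from $c_0$ with radius $R_0$. This gives $C_pR_0v\le C_p'v^2/\mu$. In your notation, the convex bound $C_pDv$ you already have suffices, because $D=R_0\le\rho=Cv/\mu$ and the target is $\cH(v)=v^2/(2\mu)$ there.

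Alternatively, enlarge your first case to $v\ge c\min\{G,\mu D\}$ with $cC<1$. That case is still handled by the zero-query or convex bound, with constants depending on $c$. Then $\rho<cCR_0<R_0$ always holds in the localized case. With either repair the proof is complete.
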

\begin{proof}
For $T<C_p\ell$ the zero-query cap suffices, since $G\ell/T\gtrsim_p G$. Otherwise the primitive run with $n=\lfloor T/4\rfloor$ gives $C_pDv_T(\tau)$, with at most $T$ raw calls.
To get a squared bound warm start and put $r_*=M_pv_T(\tau)/\mu$, with $M_p$ sufficiently large. If $R_0<2r_*$, the same one-stage primitive from $c_0$ gives error at most $C_pR_0v_T(\tau)\le C_p'v_T(\tau)^2/\mu$.
Otherwise choose $J=\lfloor\log_2(R_0/r_*)\rfloor$, so $r_*\le R_J<2r_*$. The last term of \eqref{eq:v-tau} ensures \eqref{eq:bias-target}. Each of the three terms in \eqref{eq:phase-sum} is at most a constant times $T$ times, respectively, $M_p^{-2}$, $M_p^{-1}$, and $M_p^{-r_p}$. Increasing $M_p$ makes their sum at most $T$. The final stage gives error at most $\mu R_J^2/2\le C_pv_T(\tau)^2/\mu$.
All stage parameters are fixed before sampling. Choose just one of the zero-query, convex, and strong algorithms in advance using their parameter-only bounds. The minimum of $\cH(G)$, $C_pDv_T(\tau)$, and $C_pv_T(\tau)^2/\mu$ gives \eqref{eq:fixed-tau} by \eqref{eq:Hscale}. Finite proximal parameters attain the cap within a constant factor.
\end{proof}

\subsection{A common threshold and completion of the upper bound}
Set the rejection threshold to
\begin{equation}\label{eq:tau-choice}
 \tau=128\max\{4G+L,\ L(T/e)^{1/p}\}.
\end{equation}
It satisfies the required lower bound. If the first part of the maximum controls the trace term, then
\[
 \frac{\sqrt{L^p\tau^{2-p}}}{\sqrt T}\le C_p\frac{G+L}{\sqrt T}.
\]
The contribution of the second part is $C_p A/T^q$, since
$L(T/e)^{(2-p)/(2p)}T^{-1/2}=AT^{-q}$.
For $T\ge e$ the bias is at most $C_pA/T^q$; for $T<e$ it is at most $C_ps\le C_pL/\sqrt T$. Thus
\[
 v_T(\tau)\le C_p\left[\frac{G+L}{\sqrt T}
 +\frac{G\ell}{T}+\frac A{T^q}+s(\ell/T)^q\right]
 \le C_p'v_T.
\]
Combining \cref{eq:convex-fixed,prop:fixed-tau} proves the upper half of \cref{thm:main} for every $\mu\ge0$. Squaring and using $K^{2q}\asymp_p e^{2q}+\ell^{2q}$ gives \eqref{eq:expanded}. The weak-moment analysis avoids a $\tau\ell/T$ penalty in the accepted process.

\section{Adaptive Statistical Lower Bounds}\label{sec:lower-poly}
We use a single family for all $\mu\ge0$. Three observation laws force its hidden signs to remain uncertain, and the envelope converts sign errors into objective error. All use $r_\mu(x)=\mu\norm x^2/2$ (zero when $\mu=0$), $\mathcal X=\R^d$, and $x_1=0$.

\subsection{The nonlinear loss-to-testing reduction}
For $k\le d$, $0<a\le G$, and $v\in\{-1,1\}^k$, set
\[
 R_a=\begin{cases}D,&\mu=0,\\\min\{D,a/\mu\},&\mu>0,\end{cases}\qquad \rho=R_a/\sqrt k,
 \qquad f_v(x)=\frac a{\sqrt k}\sum_{j=1}^k|x_j-\rho v_j|.
\]
The function is convex and $a$-Lipschitz. Its regularized minimizer is $(\rho v,0)$, at distance $R_a\le D$. Decode the output coordinate signs, resolving zero as $-1$. Each incorrectly decoded sign has $x_jv_j\le0$ and causes loss at least
$a\rho/\sqrt k-\mu\rho^2/2=\cH(a)/k$. Consequently
\begin{equation}\label{eq:loss-hamming}
 F_v(x)-F_v^*\ge\frac{\cH(a)}k\Ham(\widehat v(x),v).
\end{equation}
If a Hamming error $J\in[0,k]$ has $\E J\ge7k/16$, then $\Pp\{J\ge k/4\}\ge1/4$, by comparing its expectation to $(k/4)(1-P)+kP$.

\subsection{Gaussian trace and heavy-tail components}
Let $k=\lceil e\rceil\le d$, $s_0=L/\sqrt k\le s$, and $a=\frac18\min\{G,L/\sqrt T\}$. Return a selected subgradient of $f_v$ plus $N(0,s_0^2I_k)$. Jensen gives the strong $p$th moment at most $L^p$, and every directional moment is at most $s_0^p\le s^p$.
Neighboring signs change the mean at any query point by at most $2a/\sqrt k$. The conditional KL per call is at most $2a^2/L^2$, so the adaptive transcript chain rule gives total KL at most $1/32$. Pinsker bounds total variation by $1/8$, giving each bit equal-prior error at least $7/16$. Under a uniform prior, \eqref{eq:loss-hamming} and the Hamming probability bound imply
\begin{equation}\label{eq:lb-trace}
 \cE_T^{(\mu)}(\delta)\ge c\cH(\min\{G,L/\sqrt T\}).
\end{equation}
The uniform sign prior is chosen independently of the algorithm seed. Averaging therefore supplies a fixed admissible instance with the stated failure probability for each randomized algorithm. Its entire internal randomization is included in the transcript. Algorithms that stop early can be padded to $T$ calls without changing their output.

For the heavy-tail term let $\alpha=1/(8T)$ and
$a=\frac1{16}\min\{G,Lk^{1/2-1/p}\alpha^q\}$. At every call take fresh independent $B_j\sim\operatorname{Bernoulli}(\alpha)$ and return
\[
 g_j(x)=\frac a{\sqrt k}\operatorname{sgn}(x_j-\rho v_j)\frac{B_j}{\alpha},
 \quad j\le k,
\]
with $\operatorname{sgn}(0)=0$ and zero other coordinates. Its centered coordinates obey
$\E|Z_j|^p\le2a^pk^{-p/2}\alpha^{1-p}$.
Independent centered scalar variables satisfy
$\E|\sum_jU_j|^p\le2^p\sum_j\E|U_j|^p$: symmetrize with independent copies, insert independent signs, and compare their $L_p$ norm with their $L_2$ norm. The resulting directional moment bound is
\[
 \E\norm Z^p\le2a^pk^{1-p/2}\alpha^{1-p}\le L^p,
 \qquad
 \E|\ip wZ|^p\le2^{p+1}a^pk^{1-p}\alpha^{1-p}\le s^p.
\]
The latter uses $\sum|w_j|^p\le k^{1-p/2}$ and $k\ge e$.
A coordinate is masked at all $T$ calls with probability at least $7/8$. Pre-generate masks and the random seed. On that event induction on the transcript shows complete independence from the corresponding unknown sign; the adaptive query itself is a function of earlier, sign-independent responses. Its conditional testing error is at least $1/2$. Thus expected Hamming error is at least $7k/16$, and
\begin{equation}\label{eq:lb-heavy}
 \cE_T^{(\mu)}(\delta)\ge c_p\cH(\min\{G,A/T^q\}).
\end{equation}

\subsection{A confidence component}
Put $b_0=\log(1/(4\delta))$, $u=e^{-b_0/T}$, and
$J_p(u)=u+u^p/(1-u)^{p-1}$. Use one-coordinate losses $a|x_1\mp R_a|$ with
$a=\min\{G,sJ_p(u)^{-1/p}\}$. Multiply the selected subgradient by $B/(1-u)$, where $B\sim\operatorname{Bernoulli}(1-u)$ is fresh. Its centered $p$th moment is at most $a^pJ_p(u)\le s^p$. The event that all Bernoulli masks are zero has probability $u^T=4\delta$; on this event all responses vanish and the adaptive transcripts coincide. One hypothesis has wrong-half-line probability at least $2\delta$, with direct loss at least $\cH(a)$.
Moreover
\[
 J_p(e^{-z})^{-1/p}\asymp_p
 \begin{cases}z^q,&0<z\le1,\\ e^{z/p},&z\ge1,\end{cases}
\]
by $1-e^{-z}\asymp z$ in the first range and $J_p(u)\asymp_p u$ in the second. In particular it is at least $c_pz^q$ for every positive $z$. Since $b_0\asymp\ell$ for $\delta<1/10$,
\begin{equation}\label{eq:lb-confidence}
 \cE_T^{(\mu)}(\delta)\ge c_p\cH(\min\{G,s(\ell/T)^q\}).
\end{equation}
Taking the maximum of \eqref{eq:lb-trace}--\eqref{eq:lb-confidence}, using \eqref{eq:Hscale}, and $K^q\asymp_p e^q+\ell^q$ proves the lower half of Theorem~\ref{thm:main} for all budgets and confidences. At atoms of the loss distribution, use a strictly smaller error threshold.

\section{Sample Complexity and Benchmark Calculations}\label{app:consequences}
These consequences invert the common amplitude; they do not require another optimization algorithm or lower-bound construction.

\subsection{Sample complexity through the curvature envelope}
Let $N^{(\mu)}(\epsilon,\delta)=\inf\{T\in\mathbb N_0:\cE_T^{(\mu)}(\delta)\le\epsilon\}$, and define $a_\epsilon=\cH_{\mu,D}^{-1}(\epsilon)$. The inverse is
\[
 a_\epsilon=
 \begin{cases}
 \epsilon/D,&\mu=0,\\
 \sqrt{2\mu\epsilon},&\mu>0,\ \epsilon\le\mu D^2/2,\\
 \epsilon/D+\mu D/2,&\mu>0,\ \epsilon\ge\mu D^2/2.
 \end{cases}
\]
For $0<\epsilon\le c_p\cH(G)$ with a sufficiently small constant, \cref{thm:main} and \cref{eq:Hscale} imply
\begin{align}
 N^{(\mu)}(\epsilon,\delta)
 &\le C_p\left[1+\frac{G^2+L^2}{a_\epsilon^2}
 +\frac{G\ell}{a_\epsilon}
 +(e+\ell)\left(\frac{s}{a_\epsilon}\right)^{r_p}\right],\label{eq:sample-upper}\\
 N^{(\mu)}(\epsilon,\delta)
 &\ge c_p\max\left\{\frac{L^2}{a_\epsilon^2},
 (e+\ell)\left(\frac{s}{a_\epsilon}\right)^{r_p}\right\},\label{eq:sample-lower}
\end{align}
where the lower bound assumes $d\ge\lceil e\rceil$. For the upper bound, make each term of $v_T$ at most a sufficiently small constant multiple of $a_\epsilon$, and then use the upper scaling inequality for $\cH$. For the lower bound, success in the stated nonsaturated range requires $w_T$ to be at most a $p$-dependent multiple of $a_\epsilon$. The trace, heavy-tail, and confidence constructions then force the two displayed costs; the maximum of the $e$ and $\ell$ costs is comparable to their sum. Restricting to small enough $\epsilon/\cH(G)$ also excludes a zero-query solution on the hard family. When the deterministic costs in \cref{eq:sample-upper} are dominated, the sample bounds match.

\subsection{The shared \texorpdfstring{$p=3/2$}{p=3/2} benchmark}
Under the parameters stated in \cref{sec:model}, $q=1/3$ and $\ell=e+\log3$, so
\[
 s(K/T)^q\asymp e^{-1/6}T^{-1/3}.
\]
The trace and deterministic $T^{-1/2}$ terms are dominated once $T\gtrsim e$. The confidence transient $e/T$ is dominated once $T\gtrsim e^{7/4}$. In that range the amplitude is below the saturation threshold, and for $\mu=1$ it also lies in the quadratic branch. The two error rates follow by applying $\cH_{0,1}(a)=a$ and $\cH_{1,1}(a)=a^2/2$ for $a\le1$.

Solving for $T$ gives
\[
 N^{(0)}(\epsilon,\exp(-e))=\Theta(e^{-1/2}\epsilon^{-3}),
 \qquad
 N^{(1)}(\epsilon,\exp(-e))=\Theta(e^{-1/2}\epsilon^{-3/2}).
\]
These sample-complexity statements hold, respectively, for $0<\epsilon\le c e^{-3/4}$ and $0<\epsilon\le c e^{-3/2}$ for a sufficiently small absolute $c$, so the corresponding budgets satisfy $T\ge C e^{7/4}$.
\end{document}